\patchcmd{\appendix}{\@Alph}{\@Roman}{}{}
\def\comment#1{}
\newtheorem{theorem}{Theorem}
\newtheorem{definition}[theorem]{Definition}
\newtheorem{lemma}[theorem]{Lemma}
\newtheorem{proposition}[theorem]{Proposition}
\theoremstyle{remark}
\newtheorem{remark}[theorem]{Remark}
\newtheorem{example}[theorem]{Example}
 \newcommand{\m}{\mathbf{m}}
 \renewcommand{\phi}{\varphi}
\newcommand{\E}{\mathbb{E}}
\renewcommand{\P}{\mathbb{P}}
\newcommand{\N}{\mathbb{N}}
\newcommand{\R}{\mathbb{R}}
\DeclareMathOperator*{\argmin}{argmin}
\newcommand{\bes}{\begin{subequations}}
\newcommand{\ees}{\end{subequations}}
\newcommand{\eea}{\end{eqnarray}}
\newcommand{\NN}{{\mathbb N}}
\newcommand{\RR}{{\mathbb R}}
\newcommand{\cal}{\mathcal}
\renewcommand{\epsilon}{\varepsilon}
\newcommand{\fourIdx}[5]{%
\setbox1=\hbox{\ensuremath{^{#1}}}%
 \setbox2=\hbox{\ensuremath{_{#2}}}%
 \setbox5=\hbox{\ensuremath{#5}}%
 \hspace{\ifnum\wd1>\wd2\wd1\else\wd2\fi}%
 \ensuremath{\copy5^{\hspace{-\wd1}\hspace{-\wd5}#1\hspace{\wd5}#3}%
 _{\hspace{-\wd2}\hspace{-\wd5}#2\hspace{\wd5}#4}%
 }}
\numberwithin{equation}{section}
\numberwithin{theorem}{section}
\newcommand{\Opt}{\mathsf{Opt}}
\newcommand{\opt}{G}
\renewcommand{\subset}{\subseteq}
\begin{document}

\title{Stochastic Gradient Descent for  Barycenters in Wasserstein space}

 \author{Julio Backhoff-Veraguas}
\address{Faculty of Mathematics, University of Vienna}
\email{julio.backhoff@univie.ac.at}

\author{Joaquin Fontbona}
\address{Center for Mathematical Modeling, and Department of Mathematical Engineering, Universidad de Chile}
\email{fontbona@dim.uchile.cl}

\author{Gonzalo Rios}
\address{NoiseGrasp SpA}
\email{grios@noisegrasp.com}

\author{Felipe Tobar}
\address{Initiative for Data \& Artificial Intelligence, and Center for Mathematical Modeling, Universidad de Chile}
\email{ftobar@uchile.cl}
\urladdr{http://www.dim.uchile.cl/~ftobar/}

  \begin{abstract}
 
We present and study a novel algorithm for the computation of 2-Wasserstein population barycenters { of absolutely continuous probability measures on Euclidean space}. The proposed method can be seen as a stochastic gradient descent procedure in the 2-Wasserstein space,  as well as a manifestation of a Law of Large Numbers therein. The algorithm aims at finding a Karcher mean or critical point in this setting, and can be implemented ``online", sequentially using i.i.d.  random measures sampled from the population law.  We provide natural sufficient conditions for this algorithm to a.s.\  converge in the Wasserstein space towards the population barycenter,  { and we introduce a novel, general condition which ensures uniqueness of Karcher means and moreover allows us to obtain explicit, parametric convergence rates  for the expected optimality gap. We furthermore study the mini-batch version of this algorithm, and discuss examples of families of population laws to which our method and results can be applied. This work expands and deepens ideas and results introduced in an early version of \cite{backhoff2018bayesian}, in which a statistical application (and numerical implementation) of this method is developed in the context of Bayesian learning.}

\medskip

\noindent\emph{Keywords:} Wasserstein distance, Wasserstein barycenter, Fr\'echet means, Karcher means, gradient descent, stochastic gradient descent.  \\

\noindent\emph{2020 Mathematics Subject Classification: 60F15, 62L20, 65C35}

\medskip

\noindent \today

\end{abstract}

\maketitle

\section{Introduction}
\label{sec:SGDW}

Let $\mathcal P(\mathcal X)$ denote the space of Borel probability measures over a Polish space $\mathcal{X}$. Given  $\mu,\upsilon\in \mathcal P(\mathcal X)$,  denote  by 
$$\Gamma ( \mu ,\upsilon ) :=\{\gamma \in \mathcal{P}(\mathcal{X} \times \mathcal{X}): \,\gamma ( dx,\mathcal{X}) =\mu(dx) ,\gamma ( \mathcal{X},dy) =\upsilon(dy) \},$$ the set of couplings (transport plans) with marginals $\mu$ and $\upsilon$. For a fixed, compatible complete metric $d$, and given a real number $p\geq 1$, we define the $p$-Wasserstein space by 
\begin{align*}\textstyle
 \mathcal W_p(\mathcal{X})& \textstyle :=\left\{\eta\in \mathcal P(\mathcal{X}): \int_{\mathcal{X}}d(x_0,x)^p \eta(dx)< \infty,\,\,\text{some }x_0  \right\}.
\end{align*}
Accordingly, the $p$-Wasserstein distance between measures $\mu,\upsilon\in \mathcal W_p(\mathcal{X})$ is given by
\begin{align} \textstyle
	 W_{p}( \mu ,\upsilon) = \left(\inf_{\gamma \in \Gamma ( \mu	,\upsilon ) }\int_{\mathcal{X\times X}}d(x,y)^{p}\gamma (dx,dy) \right) ^{\frac{1}{p}}.\label{eq W p}
\end{align}
When $p=2$, $\mathcal{X}=\R^q$, $d$ is the Euclidean distance  and $\mu$ is absolutely continuous, {which is the setting which we will soon adopt for the remainder of the paper}, Brenier's theorem \cite[Theorem 2.12(ii)]{villani2003topics} establishes the uniqueness of a minimizer for the r.h.s.~of eq.~\eqref{eq W p}. Furthermore, this optimizer is supported on the graph of the gradient of a convex function. See \cite{ags08} and \cite{villani2008optimal} for further general background on optimal transport.   

We recall now the definition of the Wasserstein population barycenter:

\begin{definition}\label{defi bary pop}
	Given $\Pi \in \mathcal{P}(\mathcal{P}(\mathcal{X}))$, denote 
	\begin{align*}
	V_p(\bar{m}) &:=\textstyle  \int_{\mathcal{P}(\mathcal X)} W_{p}(m,\bar{m})^{p}\Pi(dm).
	\end{align*}
Any measure  $\hat{m}\in \mathcal W_p(\mathcal{X})$ which is a minimizer of the problem $$\textstyle \inf_{\bar{m} \in \mathcal{P}(\mathcal X)} V_p(\bar{m}),$$
is called a $p$-Wasserstein population barycenter of $\Pi$.  
\end{definition}

\textcolor{black}{Wasserstein barycenters were first introduced and analyzed by Carlier and Agueh in \cite{agueh2011barycenters}, in the case when the support of $\Pi\in \mathcal{P}(\mathcal{P}(\mathcal{X}))$ is finite and $\mathcal X=\R^q$. More generally, Bigot and Klein \cite{bigot2018characterization}  and  Le Gouic and Loubes \cite{le2017existence} considered the so-called \emph{population barycenter}, i.e., the general case in Definition \ref{defi bary pop} where $\Pi$ may have infinite support. These works addressed among others, the basic questions of existence and uniqueness of solutions. See also Kim and Pass \cite{kim2017wasserstein} for the Riemannian case. The concept of Wasserstein barycenter has been extensively studied from both theoretical and practical perspectives during the last decade: we refer the readers to Panaretos and Zemel's overview \cite{PaZe20} for statistical applications and   to the works of   {\color{black}Cuturi, Peyr\'e and coauthors \cite{CuPe19,CuPe16,cuturi2014fast,NIPS2021}} for computational aspects of optimal transport and applications in machine learning.}

In this article we develop a stochastic gradient descent (SGD)  algorithm for the computation of $2$-Wasserstein population barycenters. The method inherits features of the SGD rationale, in particular:

\begin{itemize}
\item It exhibits a reduced computational cost compared to methods based on the direct formulation of the barycenter problem using all the available data, as SGD only considers a limited number of samples of $\Pi\in \mathcal{P}(\mathcal{P}(\mathcal{X}))$ per iteration.
\item It is \emph{online} (or \emph{continual}, as referred to in the machine learning community), meaning that it can incorporate additional datapoints sequentially to update the barycenter estimate, whenever new observations becomes available. This is relevant even when $\Pi$ is finitely supported.
\item   { Conditions ensuring convergence towards the Wasserstein barycenter as well as finite dimensional convergence rates for the algorithm can be provided. Moreover, the variance of the gradient estimators   can be reduced by using mini-batches.}
\end{itemize}

\smallskip 

From now on we assume that
\begin{enumerate}
\item[(A1)]  $\mathcal X=\R^q$, $d$ is the squared Euclidean metric  and $p=2$. 
\end{enumerate}
\textcolor{black}{We denote by $\mathcal W_{2,ac}(\mathcal X)$ the subspace of $\mathcal W_{2}(\mathcal X)$ of absolutely continuous measures with finite second moment, and for any $\mu\in\mathcal W_{2,ac}(\mathcal X) $ and $\nu\in\mathcal W_{2}(\mathcal X) $ we write $T_\mu^\nu$ for the ($\mu$-a.s.\ unique) gradient of a convex function such that\footnote{Notice that $x\mapsto T_{\mu}^{\nu}(x)$ is a $\mu$-a.s.\ defined map and that we use throughout the notation $T_\mu^\nu(\rho)$ for the image measure/law of this map under the measure $\rho$.} $T_\mu^\nu(\mu)=\nu$.} 

\smallskip

Recall that a set $B \subset\mathcal W_{2,ac}(\mathcal X)$  is \emph{geodesically convex}, if   for every $\mu,\nu\in B$ and $t\in[0,1]$ we have $((1-t)I+tT_\mu^\nu)(\mu)\in B$, with $I$ denoting the identity operator.  We will also assume the following condition for most of the article: 
\begin{enumerate}
\item[(A2)] $\Pi$ gives full measure to a geodesically convex $W_2$-compact set $K_\Pi\subset\mathcal W_{2,ac}(\mathcal X)$. 
\end{enumerate}
In particular, under (A2) we have $\int_{\mathcal P(\mathcal X)} \int_{\mathcal X} d(x,x_0)^2m(dx)\Pi(dm)<\infty$ for all $x_0$.  Moreover,  for each $\nu \in \mathcal W_2(\mathcal X)$ and $\Pi(dm) $ a.e.\ $m$, there is a unique optimal transport map $T_m^\nu$ from $m $ to  $\nu$ and, {\color{black} by  \cite[Proposition 6]{le2017existence}, the  2-Wasserstein population barycenter is unique}. 

\begin{definition}
	Let {\color{black}$\mu_0 \in K_\Pi$}, $m_k \stackrel{\text{iid}}{\sim}  \Pi$,  and $\gamma_k > 0$ for $k \geq 0$. We define the stochastic gradient descent (SGD) sequence by
	\begin{align}
	\label{eq:sgd-seq}
	\mu_{k+1} := \left[(1-\gamma_k)I + \gamma_k T_{\mu_k}^{m_k}\right](\mu_k) \text{ , for } k \geq 0.
	\end{align}
\end{definition}
The reasons as to why we can truthfully refer to the above sequence as stochastic gradient descent will become apparent in Sections \ref{sec grad desc} and \ref{sec stoch grad}.  We stress that the sequence is a.s.\ well-defined, as one can show by induction that $\mu_k\in \mathcal W_{2,ac}(\mathcal X)$ a.s.~ {\color{black} thanks to Assumption (A2). We also refer to Section  \ref{sec stoch grad}  for remarks on the measurability of the random maps $\{T_{\mu_k}^{m_k}\}_k$ and sequence $\{\mu_k\}_k$.}

 Throughout the article, we assume the following conditions on the steps $\gamma_k$ in eq.~\eqref{eq:sgd-seq}, commonly required for the convergence of SGD methods :
\begin{align}
\label{eq:cond1-gamma}
 \textstyle \sum_{k=1}^{\infty}\gamma_k^2 &< \infty\\
\label{eq:cond2-gamma}
\textstyle \sum_{k=1}^{\infty}\gamma_k &= \infty.
\end{align}

In addition to barycenters, we will need the concept of \emph{Karcher means} (c.f.\ \cite{panaretos2017frechet}), which, in the setting of the optimization problem in $\mathcal W_{2,ac}(\mathcal X)$ considered here,  can be intuitively  understood as  an analogue of a critical  point {of a smooth function in Euclidean space  (see the discussion in Section  \ref{sec grad desc}).}

\begin{definition}\label{defi:Karcher_intro}
Given  $\Pi \in \mathcal{P}(\mathcal{P}(\mathcal{X}))$, we say that $\mu\in\mathcal W_{2,ac}(\mathcal X)$ is a Karcher mean of $\Pi$ if
$$\textstyle\mu\left( \left\{x: x =\int_{m\in\mathcal P(\mathcal X)} T_\mu^m(x)\Pi(dm)\right\} \right)=1.$$ 
\end{definition}

It is known that any 2-Wasserstein barycenter is a Karcher mean, though the latter is in general a strictly larger class, see \cite{alvarez2016fixed} or Example \ref{ex:non_unique_Karcher} below. However, if there is a unique Karcher mean, then it must coincide with the unique barycenter.
We can now state the main result of the article. 
\textcolor{black}{
\begin{theorem}
	\label{thm:sgd-convergence}
	Assume (A1), (A2), conditions \eqref{eq:cond1-gamma} and \eqref{eq:cond2-gamma}, and  that the $2-$Wasserstein barycenter $\hat{\mu}$ of $\Pi$ is the unique Karcher mean. Then, the SGD sequence $\{\mu_k\}_k$ in eq.~\eqref{eq:sgd-seq} is a.s.~$\mathcal W_2$-convergent to $\hat{\mu}\in K_\Pi$.
\end{theorem}}

An interesting aspect of Theorem \ref{thm:sgd-convergence} is that it hints at a Law of Large Numbers (LLN) on the 2-Wasserstein space. Indeed,  in the conventional LLN, for iid samples $X_i$ the summation $S_{k}:=\frac{1}{k}\sum_{i\leq k}X_i$ can be expressed as $$ \textstyle S_{k+1}=\frac{1}{k+1}X_{k+1}+\left( 1-\frac{1}{k+1} \right)S_k.$$ Therefore, if we rather think of sample $X_k$ as a measure $m_k$ and of $S_k$ as $\mu_k$, we immediately see the connection with $$\textstyle\mu_{k+1} := \left[ \frac{1}{k+1} T_{\mu_k}^{m_k}+\left(1- \frac{1}{k+1} \right)I \right](\mu_k),$$
obtained from eq.~\eqref{eq:sgd-seq} when we take $\gamma_k=\frac{1}{k+1}$. The convergence in Theorem \ref{thm:sgd-convergence} can thus be interpreted as an analogy to the convergence of $S_k$ to the mean of $X_1$ (we thank Stefan Schrott for this observation).

 \smallskip 

 In order to state our second main result, Theorem \ref{thm:uKarcherSGDrate}, we introduce a new concept:

\begin{definition}\label{def:pseudo-asoc}
Given  $\Pi \in \mathcal{P}(\mathcal{P}(\mathcal{X}))$ we say that  a Karcher mean  $\mu\in\mathcal W_{2,ac}(\mathcal X)$  of $\Pi$  is pseudo-associative if there exists $C_{\mu}>0$ such that for all $\nu \in \mathcal W_{2,ac}({\cal X})$, one has 
\begin{equation}\label{eq:pseudo-asoc_ineq} \textstyle
W_2^2(\mu, \nu)\leq C_{\mu}  \int_{\mathcal X}   \left |     \int_{\mathcal P(\mathcal X)} (T_\nu^{m}(x)-x) )\Pi(dm) \right |^2  \nu(dx) .
\end{equation}
\end{definition}

Since term on the r.h.s.\ of \eqref{eq:pseudo-asoc_ineq} vanishes for any Karcher mean $\nu$, existence of a pseudo-associative Karcher mean implies $\mu=\nu$, hence uniqueness of Karcher means.   We will see that moreover the existence of a pseudo-associative Karcher mean implies a Polyak-Lojasiewicz inequality for the functional minimized by the barycenter, see \eqref{eq:PL}. This in turn can be  utilized  to obtain convergence rates for the expected optimality gap in a similar way as in the Euclidean case. While the pseudo-associativity condition is a strong requirement, in the following result we are able to weaken Assumption (A2) into the milder:
\begin{enumerate}
\item[(A2')] $\Pi$ gives full measure to a geodesically convex set $K_\Pi\subset\mathcal W_{2,ac}(\mathcal X)$ which is $\mathcal W_2$-bounded (i.e.\ $\sup_{m\in K_\Pi}\int |x|^2m(dx)<\infty$). 
\end{enumerate}
Clearly this condition is equivalent to requiring that the support of $\Pi$ be  $\mathcal W_2$-bounded and consist of absolutely continuous measures. 
Our second main result is:

\begin{theorem}\label{thm:uKarcherSGDrate}
Assume (A1), (A2') and that the $2-$Wasserstein barycenter $\hat{\mu}$ of $\Pi$ is a pseudo-associative Karcher mean. Then, $\hat{\mu}$ is the unique barycenter of $\Pi$,  and the SGD sequence $\{\mu_k\}_k$ in eq.~\eqref{eq:sgd-seq} is a.s.~weakly convergent to $\hat{\mu}\in K_\Pi$ as soon as \eqref{eq:cond1-gamma} and \eqref{eq:cond2-gamma}  hold. Moreover, for every $a>C_{\hat{\mu}}^{-1}$ and $b\geq a$ there exists an explicit constant  $C_{a,b}>0$ such that, if $\gamma_k=\frac{a}{b+k}$ for all $k\in \NN$, the expected optimality gap satisfies: 
$$ \textstyle \mathbb{E}\left[F(\mu_{k}) - F(\hat{\mu}) \right] \leq \frac{C_{a,b}}{b+k} \, .$$
\end{theorem}

 Let us explain the reason for  the terminology ``pseudo-associative'' we have chosen. This comes from the fact that the inequality in  Definition \ref{def:pseudo-asoc} holds true (with equality and $C_{\mu}=1$) {as soon as the following associativity property holds: 
 $$T_{\mu}^{m}(x) = T_{\nu}^m \circ T_{\mu}^{\nu} (x) $$
$\mu(dx)$ a.s., for each pair $\nu,m\in W_{2,ac}(\mathcal X)$, see Remark \ref{rem:assocKarcher} below. 
The previous identity was assumed to  hold true for the results proved in \cite{bigot2018characterization}, and is always valid in $\R$, since the composition of monotone functions is monotone. Further examples where the associativity property holds  are discussed in  Section \ref{sec special cases}.  Thus, in all those settings,  \eqref{eq:pseudo-asoc_ineq}  and  Theorem \ref{thm:uKarcherSGDrate} hold   as soon as assumption (A2')  is granted, and we further have the explicit LLN-like expression $
\mu_{k+1}= \textstyle \left(\frac{1}{k}\sum_{i=1}^k T_{\mu_0}^{m_i}\right)(\mu_0)$. As regards the more general pseudo-associativity property,  we will see that it holds  for instance in the Gaussian framework studied in \cite{chewi2020gradient}, and in certain classes of scatter-location families, which we discuss in Section \ref{sec special cases}.}

 \smallskip 
 
The paper is organized as follows: 

\begin{itemize}
\item In Section \ref{sec grad desc} we recall basic ideas and results on gradient descent algorithms for Wasserstein barycenters. 
\item Section \ref{sec stoch grad} we prove  Theorem \ref{thm:sgd-convergence}, previously providing technical elements required to that end.

\item {In Section \ref{sec unique Karcher SGD rates} we discuss the notion of pseudo-associative Karcher means and its relations to the so-called Polyak- Lojasiewicz  and  variance inequalities, and we prove Theorem
\ref{thm:uKarcherSGDrate}. }

\item In Section \ref{sec:generalization-sgd} we introduce the mini-batch version of our algorithm, discuss how  this improves the variance of gradient-type estimators, and state  extensions of our previous results to that setting.  

\item In Section \ref{sec special cases} we consider closed-form examples and explain how in these cases the existence of pseudo-associative Karcher means required in  Theorem
\ref{thm:uKarcherSGDrate} can be guaranteed. W also explore certain properties of probability distributions which are ``stable'' under the operation of taking their barycenters.
\end{itemize}

\medskip 

{ In the remainder of this introduction, we  provide  independent discussions on various aspects of our results  (some of them suggested by the referees) and on related literature.}

\smallskip 


{{\bf On the assumptions.}  Although   Assumption (A2) might appear as strong at first sight, it can be guaranteed in suitable parametric situations (e.g., Gaussian, or the scatter-location  setting recalled in Section \ref{sec scat loc}) or, more generally, under moment and density constraints on the measures in $K_\Pi$.  For instance if $\Pi$ is  supported on a finite set of  measures with finite Boltzmann entropy, then Assumption (A2) is guaranteed. More generally, if the support of $\Pi$ is 2-Wasserstein compact set  and the Boltzmann  entropy is uniformly bounded on it, then Assumption (A2) is fulfilled too: see Lemma \ref{lem:verification_A2} below. }

Conditions (A2) and of uniqueness of a  Karcher mean in  Theorem  \ref{thm:sgd-convergence} are natural  substitutes of,  respectively, the  compactness of SGD sequences and the uniqueness of critical points, which hold   true  under usual sets of assumptions ensuring the convergence of SGD  in Euclidean space 
 to a minimizer,  e.g. some growth control and certain  strict convexity-type condition at the  minimum. The usual reasonings underlying the convergence analysis of stochastic gradient descent in Euclidean spaces seem however not applicable in our context, as
 the functional $\mathcal W_{2,ac}(\mathcal X)\ni\mu\mapsto F(\mu):=\int W_{2}(m,\bar{m})^{2}\Pi(dm) $ is not convex, in fact not even $\alpha$-convex for some $\alpha\in\mathbb R$, when  $\mathcal W_{2,ac}(\mathcal X)$ is endowed with its almost Riemannian structure induced by optimal transport (see \cite[Ch.7.2]{ags08}). The function $F$ is neither $\alpha$-convex for some $\alpha\in\mathbb R$, when we use generalized geodesics (see \cite[Ch.9.2]{ags08}).  In fact,  SGD in finite-dimensional Riemannian manifolds could provide a more suitable framework to draw inspiration from, see e.g.  \cite{Bo13}.  Ideas useful in that setting seem not straightforward to leverage since that work either assumes negative curvature (while $\mathcal W_{2,ac}(\mathcal X)$ is positively curved), or that the functional to be minimized  be rather smooth and have bounded derivatives of first and second order.

{ The assumption that $K_\Pi$ is contained in a subset of $\mathcal W_{2}(\mathcal X)$ of absolutely continuous probability measures, and hence the existence of optimal transport maps between elements of $K_\Pi$, appears as a more structural requirement, as it is needed to construct the iterations \eqref{eq:sgd-seq}. Indeed,   by dealing with an extended notion of Karcher mean, it is  in principle  also possible to define an analogous iterative scheme in a general setting including in particular the case of discrete laws, and thus to relax to some extent the absolute continuity requirement. However, this introduces additional technicalities, and we unfortunately were not able to provide  conditions ensuring the convergence of the method in reasonably general situations. For completeness of the discussion, 
we sketch  the main ideas of this possible extension in the Appendix.}

 {\bf Uniqueness of Karcher means.} Regarding situations where uniqueness of Karcher means can be granted, we refer to  \cite{panaretos2017frechet,PaZe20}  for sufficient conditions when the support of $\Pi$ is finite, based on the regularity theory of optimal transport, and to  \cite{bigot2018characterization} for the  case of an infinite support, under rather strong assumptions. We  remark also that in one dimension, the uniqueness of Karcher means is known to hold  {without further assumptions. { A first counterexample where this uniqueness is not guaranteed is given in \cite{alvarez2016fixed}, see also the simplified Example \ref{ex:non_unique_Karcher} below.  A general understanding of the uniqueness of Karcher means remains  however an open and interesting challenge. This is not only relevant for the present work, but also for the (non-stochastic) gradient descent method of Zemel and Panaretos \cite{panaretos2017frechet} and the fixed-point iterations of \'Alvarez-Esteban, del Barrio, Cuesta-Albertos and Matr\'an \cite{alvarez2016fixed}. The notion of pseudo-associativity introduced in Definition \ref{def:pseudo-asoc} provides an alternative  viewpoint on this question, complementary to the aforementioned ones, which might deserve being further explored. }

{ {\bf Gradient descents in Wasserstein space.} Gradient descent (GD) in  Wasserstein space was introduced as a method to compute barycenters  in the mentioned works  \cite{alvarez2016fixed} and \cite{panaretos2017frechet}.   The SGD method we develop here was introduced  in an early version of the work \cite{backhoff2018bayesian} (available on  arXiv:1805.10833),} as a way to compute Bayesian  estimators based on 2-Wasserstein barycenters \footnote{ { In view of the independent, theoretical interest of this SGD method, we decided to separately present this algorithm here,  along with a deeper analysis and more complete results on it,    and devote  \cite{backhoff2018bayesian} exclusively to its statistical application and implementation.}}.}
{ More recently,   Chewi et al. \cite{chewi2020gradient} obtained convergence rates for the expected optimality gap  for these GD and SGD methods in the case of Gaussian families of distributions with uniformly bounded, uniformly positive definite covariance matrices. To that end they relied  on proving a Polyak- Lojasiewicz inequality, and also derived quantitative  convergence bounds in $W_2$ for the SDG sequence, relying on a variance inequality, which they showed to hold under general though strong conditions on the dual potential of the barycenter problem (verified under their assumptions).  We refer to the recent work \cite{CaDeMe22} for more on the variance  inequality.   }

The Riemannian-like structure of the Wasserstein space and its associated gradient have been utilized in various other ways with statistical or machine learning motivations in recent years; see e.g.\ \cite{ChBa18} for particle-like  approximations, \cite{KeLiBlGl21} for a sequential first order method, and \cite{LiMo18,ChLi20} for information geometry perspectives.

 {

 {\bf Computational aspects.}  Implementing our SGD algorithm   requires, in general, computing or approximating optimal transport maps between two given absolutely-continuous distributions (some exceptions where explicit closed form maps are available are given in Section  \ref{sec special cases}). During the last two decades, 
 considerable progress has been made on the numerical resolution of the latter problem  through PDE  methods \cite{loeper2005numerical,benamou2000computational,angenent2003minimizing,Bonneel2011} and, more recently, through entropic regularization approaches  \cite{cuturi2013sinkhorn,solomon2015convolutional,CuPe16, CuPe19,MaGeMi21,dognin2018wasserstein} crucially relying on the Sinkhorn algorithm \cite{Sinkhorn,SinkhornKnopp}, which significantly speeds up the approximate resolution of the problem. It is also possible to approximate  optimal transport maps via estimators built from samples (based on the Sinkhorn algorithm, plug-in estimators or using stochastic approximations) \cite{HuRi19,bercu2021asymptotic, PoNL21,DePrSe21,MaBaNLWa21}. We also refer to \cite{KoLiGeSoFiBu34} for an overview and comparison of sample-free methods for continuous distributions (for instance based on neural networks).}

 {

 {\bf  Applications.} The proposed method to compute Wasserstein barycenters is well suited to situations where a population law $\Pi$ on infinitely many probability measures is considered.   The Bayesian setting addressed in \cite{backhoff2018bayesian} provides a good example of such situation, i.e.\ where one needs to compute the Wasserstein barycenter of a prior / posterior distribution $\Pi$ on models which has a possibly infinite support. 
 
 Further instances of population laws $\Pi$ with infinite support arise in the context of Bayesian deep learning \cite{Wi20}, in which a neural network's (NN) weights are sampled from a given law (e.g., Gaussian, uniform, or Laplace); in the case these NNs parametrise probability distributions, the collection of resulting probability laws is distributed according to a law $\Pi$ with possibly infinite support. Another example is Variational AutoEncoders \cite{KiWe13}, in which case one models the parameters of a law by a simple random variable (usually Gaussian) which is then passed to a \textit{decoder} NN. In both cases, the support of $\Pi$ is infinite 
naturally. Furthermore, sampling from $\Pi$ is straightforward in these cases, which eases the implementation of the SGD algorithm. }
 
{\bf Possible extensions.} It is in principle possible to define and study stochastic gradient descent methods similar to \eqref{eq:sgd-seq} for the minimization on the space of measures  of other functionals than the barycentric objective function, or with respect to other geometries than the 2-Wasserstein one. For instance, the recent paper \cite{NIPS2021}  introduces the notion of barycenters of probability measures based on {\it weak optimal transport} \cite{gozlan2017kantorovich, backhoff2019existence, gozlan2020mixture} and  extends the ideas and algorithm developed  here to that setting. A further,  natural example of functionals to consider are the  entropy-regularized barycenters, dealt with for numerical purposes in some of the  aforementioned works and most recently in the articles by Chizat et al.\ \cite{Ch23,ChVa23}.

On a different vein, it would be interesting to study conditions ensuring the convergence of the algorithm \eqref{eq:sgd-seq} to stationary points (Karcher means), when the latter is a class that strictly contains the minimum (barycenter). Under suitable conditions, such convergence can be expected to hold by analogy with the behavior of the Euclidean SGD algorithm in  general  (non necessarily convex) settings \cite{bottou2018optimization}. In fact,  we believe this question can also be linked to the pseudo-associativity of Karcher means. A deeper study is left for future work.

\section{Gradient descent in Wasserstein space: a review}
\label{sec grad desc}

We first survey the gradient descent method for the computation of 2-Wasserstein barycenters. This method will serve as a  motivation for the subsequent development of the SGD in Section \ref{sec stoch grad}. For simplicity, we take $\Pi$ finitely supported. Concretely, we suppose in this section that $$\Pi= \textstyle \sum_{i\leq L}\lambda_i\delta_{m_i},\mbox{ with } L\in\N, \, \lambda_i\geq 0 \mbox{ and } \textstyle \sum_{i\leq L}\lambda_i=1.$$
We define the operator over absolutely continuous measures
\begin{align}
\label{eq:G-operator}
  G(m) \textstyle := \left(\sum_{i=1}^{L}\lambda_iT_m^{m_i}\right)(m).
\end{align}
Notice that the fixed-points of $G$ are precisely the Karcher means of $\Pi$ presented in the introduction. Thanks  to \cite{alvarez2016fixed} the operator $G$ is continuous for the ${W}_2$ distance. Also, if at least one of the $L$ measures $m_i$ has a bounded density, then the unique Wasserstein barycenter $\hat{m}$ of $\Pi$ has a bounded density as well and satisfies $G(\hat{m}) = \hat{m}$. This suggests to define, starting from $\mu_0$,  the sequence
\begin{align}
\label{eq:fixed-point}
  \mu_{n+1} := G(\mu_n), \text{ for } n \geq 0.
\end{align}
The next result was proven by \'Alvarez-Esteban, Barrio, Cuesta-Albertos, Matr\'an in \cite[Theorem 3.6]{alvarez2016fixed} and independently by Zemel and Panaretos in \cite[Theorem 3, Corollary 2]{panaretos2017frechet}:

\begin{proposition}
	The sequence $\{\mu_n\}_{n \geq 0}$ in eq.~\eqref{eq:fixed-point} is tight and every weakly convergent subsequence of $\{\mu_n\}_{n \geq 0}$  converges in ${W}_2$ to an absolutely continuous measure in $\mathcal{W}_{2}(\mathbb{R}^q)$ which is also a Karcher mean. If some $m_i$ has a bounded density, and if there exists a unique Karcher mean, then $\hat{m}$ is the Wasserstein barycenter of $\Pi$ and we have that $W_2(\mu_n, \hat{m}) \rightarrow 0$.
\end{proposition}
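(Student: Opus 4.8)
The plan is to run the classical monotone-functional argument for fixed-point iterations on Wasserstein space. I would first show that the Fréchet functional $F(m):=\sum_{i=1}^L\lambda_i W_2^2(m,m_i)$ decreases along the iteration, with a summable defect. Writing $\bar T_m:=\sum_{i=1}^L\lambda_i T_m^{m_i}$, so that $G(m)=(\bar T_m)_\#m$, the map $(\bar T_m,T_m^{m_i})_\#m$ is a (generally non-optimal) coupling of $G(m)$ and $m_i$, hence $W_2^2(G(m),m_i)\le\int|\bar T_m-T_m^{m_i}|^2\,dm$; averaging over $i$ with the weights $\lambda_i$ and using, at each point $x$, the elementary identity $\sum_i\lambda_i|a_i-\bar a|^2=\sum_i\lambda_i|x-a_i|^2-|x-\bar a|^2$ with $a_i=T_m^{m_i}(x)$, $\bar a=\bar T_m(x)$, gives
\begin{align*}
F(G(m))\ \le\ F(m)-\int|x-\bar T_m(x)|^2\,dm(x)\ \le\ F(m)-W_2^2(m,G(m)),
\end{align*}
the last inequality because $(\mathrm{Id},\bar T_m)_\#m$ couples $m$ and $G(m)$. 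Along the iteration this yields $F(\mu_n)\downarrow F_\infty$ and $\sum_nW_2^2(\mu_n,\mu_{n+1})\le F(\mu_0)-F_\infty<\infty$, whence $W_2(\mu_n,\mu_{n+1})\to0$.

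Next I would establish the compactness statements. Since the $\lambda_i$ are positive and $F(\mu_n)\le F(\mu_0)$, each $W_2^2(\mu_n,m_i)$, hence $\int|x|^2\,\mu_n(dx)$, is bounded uniformly in $n$, so $\{\mu_n\}$ is tight. To upgrade weak convergence of subsequences to $W_2$-convergence I would check uniform integrability of the second moments: from $\mu_n=(\bar T_{n-1})_\#\mu_{n-1}$ and Jensen, $\{|\bar T_{n-1}|>R\}\subseteq\bigcup_j\{|T_{\mu_{n-1}}^{m_j}|>R\}$, so $\int_{\{|x|>R\}}|x|^2\,\mu_n(dx)$ is dominated by finitely many terms $\int_S|y|^2\,m_i(dy)$ with $m_i(S)\le R^{-2}\int|y|^2\,m_j(dy)$ — here using that each $T_{\mu_{n-1}}^{m_i}$ is $\mu_{n-1}$-essentially invertible (both measures being absolutely continuous) and transports $\mu_{n-1}$ onto the \emph{fixed} measure $m_i$ — and this vanishes as $R\to\infty$, uniformly in $n$, because the $m_i$ have finite second moment. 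Thus $\{\mu_n\}$ is $W_2$-relatively compact, and every weakly convergent subsequence converges in $W_2$. That its limit is absolutely continuous I would obtain from the fact that $G$ maps $\mathcal W_{2,ac}(\R^q)$ into itself (so the iteration is well-posed) together with the density/entropy estimates behind ``a barycenter of absolutely continuous measures is absolutely continuous'', which pass to the $W_2$-limit (cf.~\cite{alvarez2016fixed,agueh2011barycenters}).

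To identify the limit and conclude: given a subsequence $\mu_{n_k}\to\mu\in\mathcal W_{2,ac}(\R^q)$ in $W_2$, also $\mu_{n_k+1}=G(\mu_{n_k})\to\mu$ in $W_2$ since $W_2(\mu_{n_k},\mu_{n_k+1})\to0$; by the $W_2$-continuity of $G$ recalled before the statement, passing to the limit gives $G(\mu)=\mu$, i.e.\ $\mu$ is a fixed point of $G$ and hence a Karcher mean of $\Pi$ — this proves the first assertion. For the second, if some $m_i$ has bounded density then, as recalled, the unique Wasserstein barycenter $\hat m$ has bounded density and $G(\hat m)=\hat m$, so it too is a Karcher mean; if the Karcher mean is unique, every $W_2$-limit point of $\{\mu_n\}$ equals $\hat m$, and since $\{\mu_n\}$ is $W_2$-relatively compact with a unique limit point, $W_2(\mu_n,\hat m)\to0$. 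I expect the main obstacle to be precisely the two ``upgrade'' steps in the compactness part: ruling out escape of second moments to infinity so as to pass from weak to $W_2$ convergence, and preserving absolute continuity in the limit. Both genuinely exploit the structure of the averaged optimal map $\bar T_m$ — convexity through Jensen's inequality, and the regularity theory of optimal transport — rather than soft functional-analytic compactness; the monotone-functional step and the subsequence extraction, by contrast, are routine.
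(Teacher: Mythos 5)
First, a point of reference: the paper does not prove this proposition at all --- it is quoted verbatim from \cite{alvarez2016fixed} (Theorem 3.6) and \cite{panaretos2017frechet} (Theorem 3, Corollary 2), so your attempt can only be measured against those proofs. Your route is in fact the same as theirs: the variance identity $\sum_i\lambda_i|a_i-\bar a|^2=\sum_i\lambda_i|x-a_i|^2-|x-\bar a|^2$ yields the descent inequality $F(G(m))\le F(m)-W_2^2(m,G(m))$, hence $\sum_n W_2^2(\mu_n,\mu_{n+1})<\infty$; bounded second moments give tightness; your uniform-integrability argument (pushing the tail event through the essentially invertible maps $T_{\mu_{n-1}}^{m_i}$ onto the fixed measures $m_i$ and using absolute continuity of their second-moment integrals) correctly upgrades weak to $W_2$-compactness; and the identification of limit points as fixed points of $G$ via $W_2(\mu_{n_k},\mu_{n_k+1})\to0$ and the $W_2$-continuity of $G$, followed by the unique-limit-point argument, is exactly the standard conclusion. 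All of these steps are sound.

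The one genuine gap is the absolute continuity of the limit point. You write that the relevant density/entropy estimates ``pass to the $W_2$-limit,'' but absolute continuity is \emph{not} closed under weak or $W_2$ convergence (e.g.\ absolutely continuous measures can converge in $W_2$ to a Dirac mass), so knowing that $G$ preserves $\mathcal{W}_{2,ac}(\mathbb{R}^q)$ says nothing about the limit. What actually closes this step in \cite{alvarez2016fixed} is a \emph{uniform} density bound along the iteration: since $\bar T_{\mu_n}-\lambda_{i_0}T_{\mu_n}^{m_{i_0}}$ is still the gradient of a convex function, one has $D\bar T_{\mu_n}\succeq\lambda_{i_0}DT_{\mu_n}^{m_{i_0}}$ a.e., whence by the Monge--Amp\`ere equation the density of $\mu_{n+1}=(\bar T_{\mu_n})_{\#}\mu_n$ is bounded by $\lambda_{i_0}^{-q}\lVert dm_{i_0}/dx\rVert_\infty$ for every $n$; a weak limit of measures with uniformly bounded densities is absolutely continuous with the same bound. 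Note that this mechanism requires some $m_{i_0}$ to have a bounded density --- which is precisely the hypothesis appearing in the second half of the statement --- and without it the claim that limit points lie in $\mathcal{W}_{2,ac}(\mathbb{R}^q)$ (needed even to apply the continuity of $G$ at the limit, since $G$ is only defined on absolutely continuous measures) does not follow from your argument. You should either carry the bounded-density hypothesis into this step or supply the separate argument the cited references use; as written, this link in the chain is asserted rather than proved.
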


For the reader's convenience, we present next a counterexample to the uniqueness of Karcher means:
\begin{example}\label{ex:non_unique_Karcher}
	In $\mathbb R^2$ we take $\mu_1$ as the uniform measure on $B((-1,M),\epsilon)\cup B((1,-M),\epsilon)$ and $\mu_2$ the uniform measure on $B((-1,-M),\epsilon)\cup B((1,M),\epsilon)$, with $\epsilon$ a small radius and $M>>\epsilon$. Then, if $\Pi=\frac{1}{2}(\delta_{\mu_1}+ \delta_{\mu_2})$,   the uniform measure on $B((-1,0),\epsilon)\cup B((1,0),\epsilon)$ and  the uniform measure on   $B((0,M),\epsilon)\cup B((0,-M),\epsilon)$ are two distinct Karcher means.
\end{example}

Thanks to the \emph{Riemannian-like} geometry of $\mathcal{W}_{2,ac}(\mathbb{R}^q)$ one can reinterpret the iterations in eq.~ \eqref{eq:fixed-point} as a gradient descent step. This was discovered by Panaretos and Zemel in \cite{panaretos2017frechet,PaZe20}. In fact, in \cite[Theorem 1]{panaretos2017frechet} the authors  show that  {\color{black}
the functional  
\begin{align}\label{eq:FGD}
F(m) :=& \textstyle  \frac{1}{2}\sum_{i=1}^{L}\lambda_iW_2^2(m_i,m),
\end{align}
defined on $\mathcal{W}_2(\R^q)$, has 
 Fr\'echet derivative at each point $m\in  \mathcal{W}_{2,ac}(\R^q) $, given by
\begin{align}
\label{eq:Frechet-derivative}
F^\prime(m) =& \textstyle  -\sum_{i=1}^{L}\lambda_i(T_m^{m_i}-I) = I-\sum_{i=1}^{L}\lambda_iT_m^{m_i} \in L^2(m),
\end{align}
where $I$ is the identity map in $\mathbb R^q$. 
  More precisely, for such $m$ one has  when $W_2(\hat{m},m)$ goes to zero that
  \begin{equation}\label{eq:FrechDeriv}
  \frac{F(\hat{m})-F(m)-\int_{\R^q} \langle  F'(m)(x), T_m^{\hat{m}} (x)- x \rangle m(dx)  }{W_2(\hat{m},m)} \longrightarrow 0 , 
  \end{equation}
  thanks to \cite[Corollary 10.2.7]{ags08}. 
It follows from Brenier's theorem \cite[Theorem 2.12(ii)]{villani2003topics} that $\hat{m}$ is a fixed point of $G$ defined in eq.~\eqref{eq:G-operator} if and only if $F^\prime(\hat{m}) = 0$. The gradient descent sequence with step $\gamma$ starting from $\mu_0 \in \mathcal{W}_{2,ac}(\mathbb{R}^q)$ is then  defined by (c.f.\ \cite{panaretos2017frechet})
\begin{align}
\label{eq:gradient-descent}
\mu_{n+1}:= G_{\gamma}(\mu_n), \text{ for } n \geq 0,
\end{align}
where 
\begin{align*}G_{\gamma}(m) &:=  \textstyle \left[I + \gamma F^\prime(m)\right](m)= \left[(1-\gamma)I + \gamma \sum_{i=1}^{L}\lambda_iT_m^{m_i}\right](m)= \left[I + \gamma \sum_{i=1}^{L}\lambda_i(T_m^{m_i}-I)\right](m).
\end{align*}
Note that, by \eqref{eq:Frechet-derivative}, the iterations \eqref{eq:gradient-descent}  truly correspond to a gradient descent in $\mathcal{W}_2(\R^q)$ for the function in eq.~\eqref{eq:FGD}.}  We remark also  that, if $\gamma=1$,  the  sequence in eq.~\eqref{eq:gradient-descent} coincides with that in eq.~\eqref{eq:fixed-point}, i.e., $G_{1} = G$. These ideas serve us as inspiration for the stochastic gradient descent iteration in the next part.

\section{Stochastic gradient descent for barycenters in Wasserstein space}
\label{sec stoch grad}
The method presented in Section \ref{sec grad desc} is well suited for calculating the empirical barycenter. For the estimation of a population barycenter (i.e.\ when $\Pi$ does not have finite support) we would need to construct a convergent sequence of empirical barycenters, which can be computationally expensive. Furthermore, if a new sample from $\Pi$ arrives, the previous method would need to recalculate the barycenter from scratch. To address these challenges, we follow the ideas of stochastic algorithms \cite{RoMo51}, widely adopted in machine learning   \cite{bottou2018optimization},  and define a \emph{stochastic} version of the gradient descent sequence for the barycenter of $\Pi$. 

{\color{black}Recall that for $\mu_0 \in K_\Pi$ (in particular, $\mu_0$ absolutely continuous), $m_k \stackrel{\text{iid}}{\sim}  \Pi$ defined in some probability space $(\Omega, {\cal F}, \P)$  and $\gamma_k > 0$ for $k \geq 0$, we constructed the stochastic gradient descent (SGD) sequence as} 
	\begin{align*}
	\mu_{k+1} := \left[(1-\gamma_k)I + \gamma_k T_{\mu_k}^{m_k}\right](\mu_k) \text{ , for } k \geq 0.
	\end{align*}
%
The key ingredients for the convergence analysis of the above SGD iterations are the functions:
\begin{align}
F(\mu) :=& \textstyle  \frac{1}{2}\int_{\mathcal P(\mathcal X)} W_{2}^2(\mu,m)\Pi(dm),\label{eq:FSGD}\\
F^\prime(\mu) (x):=& \textstyle  -\int_{\mathcal P(\mathcal X)} (T_\mu^{m}-I))\Pi(dm)(x), \label{eq:F'SGD}
\end{align}
the natural analogues to the eponymous objects {\color{black} \eqref{eq:FGD} and  \eqref{eq:Frechet-derivative}.  In this setting, one can  formally (or rigorously, under additional assumptions) check that $F'$ is the actual Frechet derivative of $F$, and this  justifies naming  $\{\mu_k\}_k$ a SGD sequence.  In the sequel, the notation $F$ and $F'$ always refers to the functions defined in eqs.~\eqref{eq:FSGD} and \eqref{eq:F'SGD}, respectively. }

Observe that the population barycenter $\hat \mu$ is the unique minimizer of $F$. The following lemma justifies that $F'$ is well defined and that $\|F'(\hat\mu)\|_{L^2(\hat \mu)}=0$, and in particular $\hat \mu$ is a Karcher mean. This is a generalization of the corresponding result in \cite{alvarez2016fixed} where only the case $|\text{supp}(\Pi)|<\infty$ is covered. 

\begin{lemma}
	\label{measurability fixed point} {\color{black}Let $\tilde\Pi$ be a probability measure concentrated on $\mathcal W_{2,ac}(\mathbb R^q)$.  
There exists a jointly measurable function  $\mathcal W_{2,ac}(\mathbb R^q)\times\mathcal W_2(\mathbb R^q)\times \mathbb R^q \ni(\mu,m,x)\mapsto T^m_\mu(x)$  which is $\mu(dx)\Pi(dm)\tilde\Pi(d\mu)$}-a.s.\ equal to the  unique optimal transport map from $\mu$ to $m $ at $x$. Furthermore, letting $\hat \mu$ be a barycenter of $\Pi$, we have
	$x \,=  \, \int T^m_{\hat\mu}(x) \Pi(dm),\,\, \hat \mu(dx)$-a.s.
\end{lemma}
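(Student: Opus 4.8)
The plan is to establish two things: first, measurability of the optimal transport map jointly in $(\mu,m,x)$, and second, the first-order optimality condition (Karcher mean property) for the barycenter $\hat\mu$. For the measurability part I would invoke Brenier's theorem to identify $T^m_\mu$ with the $\mu$-a.s.\ unique gradient of a convex function $\phi_{\mu,m}$ pushing $\mu$ to $m$. The standard route is to exploit the fact that the optimal coupling $\gamma_{\mu,m}\in\Gamma(\mu,m)$ is unique (here by absolute continuity of $\mu$) and depends measurably on $(\mu,m)$, e.g.\ by a measurable selection theorem applied to the set-valued map $(\mu,m)\mapsto\operatorname{argmin}$ of the (lower semicontinuous, in fact continuous) transport cost over the weakly compact set $\Gamma(\mu,m)$; uniqueness upgrades the measurable selection to genuine measurability of $(\mu,m)\mapsto\gamma_{\mu,m}$. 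Since $\gamma_{\mu,m}$ is concentrated on the graph of $T^m_\mu$, one recovers $T^m_\mu(x)$ as a disintegration $x\mapsto\int y\,\gamma_{\mu,m}(dy\mid x)$, which is jointly measurable in $(\mu,m,x)$ up to $\mu(dx)\Pi(dm)\tilde\Pi(d\mu)$-null sets after a Fubini-type argument (using that $\gamma_{\mu,m}(dx,dy)=\delta_{T^m_\mu(x)}(dy)\mu(dx)$ for $\tilde\Pi$-a.e.\ $\mu$, $\Pi$-a.e.\ $m$). A cleaner alternative, if available from the cited literature (\cite{ags08}), is to select the convex potential $\phi_{\mu,m}$ measurably and set $T^m_\mu=\nabla\phi_{\mu,m}$, using that gradients of convex functions are Borel.

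For the optimality condition, the strategy is a variational argument along Wasserstein geodesics emanating from $\hat\mu$. Fix $\hat\mu$ a barycenter and let $\nu\in\mathcal W_{2,ac}(\R^q)$ be arbitrary; consider the geodesic $t\mapsto\mu_t:=((1-t)I+tT^\nu_{\hat\mu})(\hat\mu)$. Using the Fr\'echet-differentiability estimate \eqref{eq:FrechDeriv} (valid here by \cite[Corollary 10.2.7]{ags08}, generalized from the finite-support functional \eqref{eq:FGD} to \eqref{eq:FSGD} by integrating against $\Pi$ and using dominated convergence, justified by (A2) which gives uniform integrability of the relevant displacements over the compact $K_\Pi$), minimality of $F$ at $\hat\mu$ forces $\int_{\R^q}\langle F'(\hat\mu)(x),\, T^\nu_{\hat\mu}(x)-x\rangle\,\hat\mu(dx)\ge 0$ for all such $\nu$. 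Because the maps $T^\nu_{\hat\mu}-I$, as $\nu$ ranges over absolutely continuous measures, span a dense enough cone in $L^2(\hat\mu)$ (more precisely, $-F'(\hat\mu)$ itself can be realized, up to rescaling/translation, as $T^\nu_{\hat\mu}-I$ for a suitable $\nu$, since $I+sF'(\hat\mu)$ is a gradient of a convex function for $s$ small—it is an $L^2(\hat\mu)$-small perturbation of the identity—and Brenier applies), one concludes $F'(\hat\mu)=0$ in $L^2(\hat\mu)$, i.e.\ $x=\int T^m_{\hat\mu}(x)\,\Pi(dm)$ for $\hat\mu$-a.e.\ $x$. This is exactly the Karcher mean identity.

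The main obstacle I anticipate is the infinite-support generalization: several facts that are elementary when $\Pi=\sum_{i\le L}\lambda_i\delta_{m_i}$ (finiteness of $F'(\hat\mu)$ in $L^2(\hat\mu)$, the interchange of $\int\Pi(dm)$ with differentiation in $t$, and the lower-semicontinuity/continuity needed for the measurable selection) require care with integrability and measurability uniformly in $m$. Assumption (A2) is what rescues this: $W_2$-compactness of $K_\Pi$ yields a uniform second-moment bound $\sup_{m\in K_\Pi}\int d(x_0,x)^2 m(dx)<\infty$, hence $\int\|T^m_{\hat\mu}-I\|_{L^2(\hat\mu)}^2\,\Pi(dm)<\infty$ by the triangle inequality $\|T^m_{\hat\mu}-I\|_{L^2(\hat\mu)}=W_2(\hat\mu,m)$, giving $F'(\hat\mu)\in L^2(\hat\mu)$ and the domination needed to pass limits under $\int\Pi(dm)$. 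A secondary technical point is ensuring the perturbed map $I+sF'(\hat\mu)$ is genuinely the Brenier map of $\hat\mu$ onto its pushforward; for this I would note it is cyclically monotone (being a small perturbation of the identity in $L^2(\hat\mu)$, or invoking that $F'(\hat\mu)=\int(T^m_{\hat\mu}-I)\Pi(dm)$ is a "barycentric" combination of cyclically monotone displacements and use convexity of the set of gradients of convex functions), and then apply Brenier. Modulo these integrability/measurability verifications—routine but needing (A2)—the argument follows the template of \cite{alvarez2016fixed,panaretos2017frechet}.
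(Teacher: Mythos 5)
Your proposal is correct in substance but takes a genuinely different route on the main (Karcher) claim, and the measurability part matches the paper only in spirit: the paper disposes of joint measurability by a single citation to the reference [FGM], whereas you sketch the standard measurable-selection-plus-disintegration argument that such a reference would contain; that is a reasonable stand-in. For the identity $x=\int T^m_{\hat\mu}(x)\Pi(dm)$, the paper does \emph{not} use Fr\'echet differentiability or first-order conditions at all. It argues by contradiction: if $\hat\mu$ is not a Karcher mean, the pushforward $\nu:=\bigl(\int T^m_{\hat\mu}\Pi(dm)\bigr)(\hat\mu)$ is a strictly better competitor, via the elementary coupling bound $W_2^2(\nu,\bar m)\le\bigl\lVert T^{\bar m}_{\hat\mu}-\int T^m_{\hat\mu}\Pi(dm)\bigr\rVert^2_{L^2(\hat\mu)}$, an expansion of squares, and Brenier's theorem; this yields $2F(\nu)\le 2F(\hat\mu)-\lVert F'(\hat\mu)\rVert^2_{L^2(\hat\mu)}$ and needs no differentiability of the population functional $F$, which your route must first establish (you correctly flag that \eqref{eq:FrechDeriv} is stated for finite support and that (A2) supplies the domination). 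Your variational argument does close, but one justification in it is wrong as stated: being an $L^2(\hat\mu)$-small perturbation of the identity does \emph{not} make a map the gradient of a convex function (add a small rotation field to $I$). What saves you is your own parenthetical alternative, after fixing a sign: the admissible perturbation is $I-sF'(\hat\mu)=(1-s)I+s\int T^m_{\hat\mu}\Pi(dm)$, which for $s\in[0,1]$ is a convex combination of gradients of convex functions, hence the Brenier map onto its pushforward; plugging this $\nu$ into your first-order inequality gives $-s\lVert F'(\hat\mu)\rVert^2_{L^2(\hat\mu)}\ge 0$, i.e.\ $F'(\hat\mu)=0$. With those two repairs your argument is a valid, more ``Riemannian'' alternative; the paper's computation is essentially the same mechanism carried out by hand at full step $s=1$, in the style of \cite{alvarez2016fixed}, and is the more elementary of the two.
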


\begin{proof}
		The existence of a jointly measurable version of the unique optimal maps is proved in  \cite{FGM}. Let us prove the last assertion. Letting $T^m_{\hat\mu}=:T^m$, we have by Brenier's theorem \cite[Theorem 2.12(ii)]{villani2003topics}  that
		\begin{align*}
		 \textstyle   \int W_2(\hat \mu,m)^2\Pi(dm) =  &  \textstyle \int\int |x-T^m(x)|^2\hat \mu(dx)\Pi(dm) \\
    = & \textstyle \int\int |x-  \int T^{\bar m}(x)\Pi(d{\bar m})   +  \int T^{\bar m}(x)\Pi(d{\bar m}) - T^m(x)|^2\hat \mu(dx)\Pi(dm) \\
     = &  \textstyle  \int \left| x- \textstyle \int T^{\bar m}(x)\Pi(d{\bar m}) \right|^2 \hat \mu(dx) +  \textstyle \int\int |  \int T^{\bar m}(x)\Pi(d{\bar m}) - T^m(x)|^2\hat \mu(dx)\Pi(dm) 
		\end{align*}
  where we used the fact that $2 \int \int \langle x - \int T^{\bar m}(x)\Pi(d{\bar m})  ,  \int T^{\bar m}(x)\Pi(d{\bar m}) 
 - T^m(x) \rangle \Pi(dm)\hat \mu(dx) =0 $.
 The second term in the last line is an upper bound for 
 $$\textstyle \int W_2\left(\left( \int T^{\bar m} \Pi(d\bar{m})\right )( \mu) \,,\, m\right)^2\Pi(d m) \geq  \int W_2(\hat \mu,m)^2\Pi(dm) .$$
 We conclude that $  \int \left| x- \textstyle \int T^{\bar m}(x)\Pi(d{\bar m}) \right|^2 \hat \mu(dx)$
 as required. 
		
	\end{proof}

\medskip 

{\color{black} Notice that  Lemma \ref{measurability fixed point} ensures that the SGD  sequence $\{\mu_k\}_k$ is  well defined as a  sequence of (measurable) $\mathcal{W}_2$-valued random variables.  More precisely, denoting by $\mathcal F_{0}$ the trivial sigma-algebra and $\mathcal F_{k+1},k\geq 0,$  the sigma-algebra generated by $m_0,\dots,m_{k}$,  one can inductively apply the first part of Lemma \ref{measurability fixed point} with $\tilde{\Pi}=law (\mu_{k})$ to check that $T_{\mu_k}^{m_k}(x)$ is measurable with respect to $\mathcal F_{k+1}\otimes \mathcal B (\R^q)$, where $\mathcal B$ stands for the Borel sigma-field. This implies that both $\mu_{k}$ and  $\lVert F^\prime(\mu_{k})\rVert^2_{L^2(\mu_{k})}$  are  measurable w.r.t. $\mathcal F_{k}$. }

{\color{black} The next proposition suggests that, in expectation, the sequence $\{F(\mu_k)\}_k$ is essentially decreasing. This is a first insight into the behaviour of the sequence $\{\mu_k\}_k$.}



\begin{proposition}
	\label{prop:bound-step}
	The SGD sequence in eq.~\eqref{eq:sgd-seq} verifies (a.s.) that 
	\begin{align}
	\label{eq:sgd-ineq}
	\mathbb{E}\left[F(\mu_{k+1}) - F(\mu_k)|\mathcal F_{k}\right] \leq \gamma_k^2F(\mu_k) - \gamma_k\lVert F^\prime(\mu_k)\rVert^2_{L^2(\mu_k)}.
	\end{align}
	\begin{proof}
		Let  $\nu \in\text{supp}( \Pi)$. Clearly
		$ \textstyle  \left( \left[(1-\gamma_k)I + \gamma_k T_{\mu_k}^{m_k}\right]\, ,\,T_{\mu_k}^\nu \right )(\mu_k) $
		is a feasible (not necessarily optimal) coupling with first and second marginals $\mu_{k+1}$ and $\nu$ respectively.  Denoting $O_m:= T_{\mu_k}^{m} -  I $,  we have
		\begin{align*}
		W_2^2(\mu_{k+1},\nu) 	  	\leq \lVert (1-\gamma_k)I + \gamma_k T_{\mu_k}^{m_k} - T_{\mu_k}^{\nu}  \rVert^2_{L^2(\mu_k)}
		&= \lVert - O_{\nu} + \gamma_k O_{m_k} \rVert^2_{L^2(\mu_k)}\\
		&= \lVert O_{\nu} \rVert^2_{L^2(\mu_k)} -2\gamma_k\langle O_{\nu}, O_{m_k}\rangle_{L^2(\mu_k)} +  \gamma_k^2\lVert O_{m_k} \rVert^2_{L^2(\mu_k)}.
		\end{align*}
		Evaluating $\mu_{k+1}$ on the functional $F$ and thanks to the previous inequality, we have
		\begin{align*}
		F(\mu_{k+1}) =&  \textstyle \frac{1}{2}\int W_{2}^2(\mu_{k+1},\nu)\Pi(d\nu)\\
		\leq& \textstyle  \frac{1}{2}\int\lVert O_{\nu} \rVert^2_{L^2(\mu_k)}\Pi(d\nu) -\gamma_k\left\langle \int O_{\nu}\Pi(d\nu), O_{m_k}\right\rangle_{L^2(\mu_k)} +  \frac{\gamma_k^2}{2}\lVert O_{m_k} \rVert^2_{L^2(\mu_k)}\\
		=& \textstyle F(\mu_k)+\gamma_k\left\langle F^\prime(\mu_k), O_{m_k}\right\rangle_{L^2(\mu_k)} +  \frac{\gamma_k^2}{2}\lVert O_{m_k} \rVert^2_{L^2(\mu_k)}.
		\end{align*}
		Taking conditional expectation with respect to $\mathcal F_k$, and as $m_k$ is independently sampled from this sigma-algebra, we conclude
		\begin{align*}
		\mathbb{E}\left[F(\mu_{k+1})|\mathcal F_k\right] \leq &  \textstyle F(\mu_k)+\gamma_k\left\langle  F^\prime(\mu_k), \int O_{m}\Pi(dm)\right\rangle_{L^2(\mu_k)} +  \frac{\gamma_k^2}{2}\int\lVert O_{m} \rVert^2_{L^2(\mu_k)}\Pi(dm)\\
		=&    (1+\gamma_k^2)F(\mu_k) - \gamma_k\lVert F^\prime(\mu_k) \rVert^2_{L^2(\mu_k)}.
		\end{align*}
	\end{proof}
\end{proposition}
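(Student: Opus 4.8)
The plan is to prove Proposition \ref{prop:bound-step} by establishing, for a \emph{fixed} target measure $\nu\in\mathcal W_{2,ac}(\mathcal X)$, a pointwise (in $\omega$) quadratic inequality for $W_2^2(\mu_{k+1},\nu)$, then integrating against $\Pi(d\nu)$ and taking conditional expectation with respect to $\mathcal F_k$. The whole argument rests on the elementary but crucial observation that, given $\mu_k$, the map $(1-\gamma_k)I+\gamma_k T_{\mu_k}^{m_k}$ transports $\mu_k$ onto $\mu_{k+1}$ by definition, so coupling it with the optimal map $T_{\mu_k}^\nu$ produces an \emph{admissible} (generally suboptimal) coupling of $\mu_{k+1}$ and $\nu$. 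This yields
\begin{align*}
W_2^2(\mu_{k+1},\nu)\le\big\lVert (1-\gamma_k)I+\gamma_k T_{\mu_k}^{m_k}-T_{\mu_k}^{\nu}\big\rVert^2_{L^2(\mu_k)}.
\end{align*}

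Next I would expand the right-hand side. Writing $O_m:=T_{\mu_k}^m-I$ one has $(1-\gamma_k)I+\gamma_k T_{\mu_k}^{m_k}-T_{\mu_k}^{\nu}=\gamma_k O_{m_k}-O_\nu$, and expanding the square in $L^2(\mu_k)$ gives three terms: $\lVert O_\nu\rVert^2_{L^2(\mu_k)}$, the cross term $-2\gamma_k\langle O_\nu,O_{m_k}\rangle_{L^2(\mu_k)}$, and $\gamma_k^2\lVert O_{m_k}\rVert^2_{L^2(\mu_k)}$. Integrating in $\nu$ against $\Pi$ and dividing by $2$, the first term becomes $\frac12\int\lVert O_\nu\rVert^2_{L^2(\mu_k)}\Pi(d\nu)$; by Brenier's theorem this equals $\frac12\int W_2^2(\mu_k,\nu)\Pi(d\nu)=F(\mu_k)$. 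The cross term becomes $-\gamma_k\big\langle\int O_\nu\,\Pi(d\nu),\,O_{m_k}\big\rangle_{L^2(\mu_k)}=\gamma_k\langle F'(\mu_k),O_{m_k}\rangle_{L^2(\mu_k)}$ by the definition \eqref{eq:F'SGD} of $F'$. The last term is $\frac{\gamma_k^2}{2}\lVert O_{m_k}\rVert^2_{L^2(\mu_k)}$. So far this is a deterministic identity given $\mathcal F_{k+1}$.

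Then I would take $\mathbb E[\cdot\mid\mathcal F_k]$. Since $m_k$ is independent of $\mathcal F_k$ and $\mu_k$ is $\mathcal F_k$-measurable (this is exactly the measurability noted after Lemma \ref{measurability fixed point}), the conditional expectation passes the $m_k$-averaging onto $\Pi$: the cross term becomes $\gamma_k\big\langle F'(\mu_k),\int O_m\,\Pi(dm)\big\rangle_{L^2(\mu_k)}=-\gamma_k\lVert F'(\mu_k)\rVert^2_{L^2(\mu_k)}$, and the quadratic term becomes $\frac{\gamma_k^2}{2}\int\lVert O_m\rVert^2_{L^2(\mu_k)}\Pi(dm)=\gamma_k^2 F(\mu_k)$, again using Brenier's identity $\lVert O_m\rVert^2_{L^2(\mu_k)}=W_2^2(\mu_k,m)$. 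Collecting terms gives $\mathbb E[F(\mu_{k+1})\mid\mathcal F_k]\le (1+\gamma_k^2)F(\mu_k)-\gamma_k\lVert F'(\mu_k)\rVert^2_{L^2(\mu_k)}$, which is the claimed \eqref{eq:sgd-ineq} after subtracting $F(\mu_k)$.

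The main obstacle is not any single estimate but making sure all the manipulations are legitimate: that $\mu_k\in\mathcal W_{2,ac}(\mathcal X)$ a.s.\ so the optimal maps $T_{\mu_k}^{m_k}$ and $T_{\mu_k}^\nu$ exist and are genuine functions (guaranteed by (A2) and the inductive argument mentioned in the text); that the interchange of $\int_{\mathcal P(\mathcal X)}\Pi(d\nu)$ with the $L^2(\mu_k)$ inner product and with $\mathbb E[\cdot\mid\mathcal F_k]$ is justified by Fubini, which needs the integrability $\int\lVert O_\nu\rVert^2_{L^2(\mu_k)}\Pi(d\nu)=2F(\mu_k)<\infty$ — finite thanks to (A2) and the uniform second-moment bound; and that the joint measurability from Lemma \ref{measurability fixed point} is what lets us treat $(\nu,x)\mapsto T^\nu_{\mu_k}(x)$ as a bona fide measurable integrand. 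Once these points are in place the computation is the routine "complete the square" calculation above.
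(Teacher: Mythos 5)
Your proposal is correct and follows essentially the same route as the paper's proof: the suboptimal coupling $\bigl((1-\gamma_k)I+\gamma_k T_{\mu_k}^{m_k},\,T_{\mu_k}^{\nu}\bigr)(\mu_k)$, the expansion of the square in $L^2(\mu_k)$, integration against $\Pi(d\nu)$ with Brenier's identity, and conditional expectation using the independence of $m_k$ from $\mathcal F_k$. The additional remarks on Fubini and measurability are sound and only make explicit what the paper leaves implicit.
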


Next Lemma states key continuity properties of the functions  $F$ and $F'$.  {\color{black}For this result,  assumption (A2) can be dropped and it is only required that  $\int\int\|x\|^2m(dx)\Pi(dm)<\infty$.} 

\begin{lemma}
	\label{continuity norm F' }
	 Let $(\rho_n)_n \subset \mathcal{W}_{2,ac}(\RR^q)   $  be a sequence converging w.r.t.\  $W_2$ to $\rho\in  \mathcal{W}_{2,ac}(\RR^q) $. Then, as $n\to \infty$,we have \textcolor{black}{ i) :  $F(\rho_n)\to  F(\rho)$ and ii) : 
	  $\| F'(\rho_n)\|_{L^2(\rho_n)}\to \| F'(\rho)\|_{L^2(\rho)}$.}
\end{lemma}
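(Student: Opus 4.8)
\textbf{Proof strategy for Lemma \ref{continuity norm F' }.}

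The plan is to treat the two assertions separately but using the same core estimate, namely a quantitative bound on how $W_2^2(\rho,m)$ and the optimal map $T_\rho^m$ vary with $\rho$. For part (i), I would argue directly that $\rho\mapsto W_2^2(\rho,m)$ is $W_2$-continuous (indeed $1$-Lipschitz in $W_2$ up to a triangle-inequality factor): for each fixed $m$ one has $|W_2(\rho_n,m)-W_2(\rho,m)|\le W_2(\rho_n,\rho)\to 0$, and since $\rho_n\to\rho$ in $W_2$ the second moments $\int\|x\|^2\rho_n(dx)$ converge, so $W_2^2(\rho_n,m)$ is uniformly integrable against $\Pi(dm)$ (it is dominated by $2W_2^2(\rho_n,\delta_0)+2W_2^2(\delta_0,m)$ with the first term bounded uniformly in $n$ and the second $\Pi$-integrable by the standing moment hypothesis). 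Dominated convergence then gives $F(\rho_n)=\tfrac12\int W_2^2(\rho_n,m)\Pi(dm)\to\tfrac12\int W_2^2(\rho,m)\Pi(dm)=F(\rho)$.

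For part (ii), the starting point is the identity, valid because $\rho_n,\rho$ are absolutely continuous, that
\begin{align*}
\|F'(\rho_n)\|_{L^2(\rho_n)}^2 = \int_{\R^q}\Big\|\int_{\mathcal P(\mathcal X)}\big(T_{\rho_n}^m(x)-x\big)\Pi(dm)\Big\|^2\rho_n(dx),
\end{align*}
and similarly for $\rho$. The key input is the stability of optimal maps under $W_2$-convergence of the source measure: if $\rho_n\to\rho$ in $W_2$ with all measures in $\mathcal W_{2,ac}$, then for $\Pi$-a.e.\ $m$ the optimal maps converge, $T_{\rho_n}^m\to T_\rho^m$, in an $L^2$ sense adapted to the varying base measures — concretely, there are couplings $\pi_n$ of $\rho_n$ and $\rho$ (e.g.\ the optimal ones) such that $\int\|T_{\rho_n}^m(x)-T_\rho^m(y)\|^2\,\pi_n(dx,dy)\to 0$. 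This is the content of the classical stability theorem for Monge solutions (e.g.\ \cite[Corollary 5.23]{villani2008optimal} or the convergence results in \cite{ags08}); I would also use that $\int\|T_{\rho_n}^m(x)-x\|^2\rho_n(dx)=W_2^2(\rho_n,m)$ is, by part (i)'s estimate, uniformly (in $n$) integrable in $m$ against $\Pi$. Combining these, one passes to the limit first in $m$ under the $\Pi$-integral (dominated convergence, with the $\Pi$-integrable envelope $2W_2^2(\rho_n,\delta_0)+2W_2^2(m,\delta_0)$ controlling $\|\int(T_{\rho_n}^m(x)-x)\Pi(dm)\|^2\le\int\|T_{\rho_n}^m(x)-x\|^2\Pi(dm)$), and then in $x$ against the varying measures $\rho_n$, using the couplings $\pi_n$ to compare $\int(\cdot)\rho_n(dx)$ with $\int(\cdot)\rho(dx)$.

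The main obstacle is the interchange of the two limiting procedures and the fact that the outer integration is against the \emph{moving} measures $\rho_n$, not a fixed reference measure; the vector field $x\mapsto\int(T_{\rho_n}^m(x)-x)\Pi(dm)$ whose $L^2(\rho_n)$-norm we must control is itself $n$-dependent and only converges in a weak/coupling sense, so one cannot simply invoke continuity of a fixed functional. I would handle this by a double approximation: fix $\eps>0$, use uniform integrability in $m$ to reduce to a compact set $\mathcal K_\eps\subset\mathcal P(\mathcal X)$ of measures $m$ (cutting the tail contribution below $\eps$ uniformly in $n$), on which the maps $T_{\rho_n}^m$ converge to $T_\rho^m$ with a modulus that can be made uniform in $m\in\mathcal K_\eps$ by a compactness/Egorov-type argument, and then estimate $|\,\|F'(\rho_n)\|_{L^2(\rho_n)}-\|F'(\rho)\|_{L^2(\rho)}|$ via the triangle inequality using the optimal coupling $\pi_n$ between $\rho_n$ and $\rho$, whose transport cost $\to 0$. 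Letting $n\to\infty$ and then $\eps\to 0$ concludes. An alternative, possibly cleaner route is to lift everything to a common probability space: realize $\rho_n$ and $\rho$ via random variables $X_n\to X$ in $L^2(\Omega)$, so that $T_{\rho_n}^m(X_n)$ and $T_\rho^m(X)$ become $L^2(\Omega)$-random variables, reducing both claims to $L^2(\Omega)$-convergence statements where dominated convergence applies more transparently; I would likely present the proof in that language.
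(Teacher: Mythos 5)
Your part (i) argument is correct and in fact more elementary than the paper's: the paper derives (i) as a byproduct of an $L^2(\Omega)$-convergence statement obtained via Skorokhod representation, whereas your triangle-inequality-plus-domination argument needs nothing beyond the standing moment hypothesis. The concern is part (ii). Your primary route --- truncating to a compact set $\mathcal K_\varepsilon$ of measures $m$ and then invoking ``a modulus that can be made uniform in $m\in\mathcal K_\varepsilon$ by a compactness/Egorov-type argument'' --- has a genuine gap at exactly the step you yourself flag as the main obstacle. Stability of optimal maps gives you, for each \emph{fixed} $m$, that $\int\|T_{\rho_n}^m(x)-T_\rho^m(y)\|^2\pi_n(dx,dy)\to 0$; upgrading this to uniformity over $m\in\mathcal K_\varepsilon$ is not something Egorov hands you (Egorov yields uniformity off an exceptional set of $m$'s whose contribution you would then again have to control, and the map $m\mapsto T_{\rho_n}^m$ is only defined up to null sets, so even the joint measurability in $(m,x)$ needed to set this up is nontrivial --- cf.\ Lemma \ref{measurability fixed point}). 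More importantly, even granting such uniformity, you must still pass a limit through the squared norm of the $\Pi(dm)$-average, $\|\int(T_{\rho_n}^m(x)-x)\Pi(dm)\|^2$, integrated against the \emph{moving} measure $\rho_n$; none of your stated ingredients actually performs this interchange.

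The paper closes precisely this gap with one identity that is absent from your sketch, and which you would need if you follow your ``alternative, cleaner route'' (which is indeed the paper's proof): after realizing $X_n\to X$ a.s.\ with laws $\rho_n,\rho$ (Skorokhod) and adjoining an independent $\mathbf m\sim\Pi$, one has $F'(\rho_n)(X_n)=\E[X_n-T_{\rho_n}^{\mathbf m}(X_n)\mid X_n]=\E[X_n-T_{\rho_n}^{\mathbf m}(X_n)\mid\mathcal G_\infty]$, where $\mathcal G_\infty=\sigma(X_1,X_2,\dots)$. Since conditional expectation onto the \emph{fixed} $\sigma$-field $\mathcal G_\infty$ is an $L^2(\Omega)$-contraction, the $L^2(\Omega)$-convergence of $X_n-T_{\rho_n}^{\mathbf m}(X_n)$ (obtained from a.s.\ convergence via the map-stability theorem of Cuesta-Albertos et al.\ together with uniform integrability and Vitali's theorem) immediately yields $\|F'(\rho_n)\|_{L^2(\rho_n)}=\|\E[X_n-T_{\rho_n}^{\mathbf m}(X_n)\mid\mathcal G_\infty]\|_{L^2(\Omega)}\to\|F'(\rho)\|_{L^2(\rho)}$. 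This single observation replaces your entire double-approximation scheme; without it (or an equivalent device), part (ii) is not proved.
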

\begin{proof} 
 We prove both convergence claims using Skorokhod's representation theorem. Thanks to that result, in a given probability space $(\Omega,{\cal G}, \P)$ one can simultaneously construct a sequence of random vectors $(X_n)_n$ of laws $(\rho_n)_n $, and a random variable $X$ of law $\rho$, such that $(X_n)_n$ converges $\P-$ a.s. to $X$.  Moreover,  by Theorem 3.4 in \cite{cuestaalbertos1997optimal}, the sequence  $(T_{\rho_n}^m (X_n))_n$ converges $\P-$ a.s. to $T_{\rho}^m (X)$. Notice that, for all $n\in \NN$,   $T_{\rho_n}^m (X_n)$ distributes according to the law $m$, and the same holds true for $T_{\rho}^m (X)$.

 We now enlarge the probability space  $(\Omega,{\cal G}, \P)$ (maintaining the same notation for simplicity)  with an independent  random variable  $\m$  in   $ \mathcal{W}_2(\R^d)$ with law  $\Pi$ (thus independent of  $(X_n)_n$  and $X$). {\color{black}  Applying Lemma   \ref{measurability fixed point} with $\tilde{\Pi}=\delta_{\rho_n}$ for each $n$, or with $\tilde{\Pi}=\delta_{\rho}$,  one can show that  $ (X_n, T_{\rho_n}^{\m}  (X_n) ) $  and  $ (X, T_{\rho}^{\m} (X) ) $ are random variables  in  $(\Omega,{\cal G}, \P)$. By conditioning on $\{ \m=m\}$ one further obtains that 
 $ (X_n, T_{\rho_n}^{\m}  (X_n) )_{n\in \NN}$  converges  $\P-$ a.s.~to  $ (X, T_{\rho}^{\m} (X) ) $.}

Notice that  $\sup_n \E \left( \| X_n\|^2 \mathbf{1}_{ \|X_n\|^2 \geq M}\right) = \sup_n \int_{ \|x\|^2\geq M}  \|x\|^2 \rho_n(dx) \to 0$ as $M\to \infty$  since $(\rho_n)_n$ converges in $ \mathcal{W}_2(\mathcal{X})$, while, upon conditioning on $\m$, 
$$\textstyle  \sup_n \E \left( \|  T_{\rho_n}^{\m}  (X_n) \|^2 \mathbf{1}_{ \| T_{\rho_n}^{\m}  (X_n) \|^2 \geq M}\right)=   \int_{\mathcal{W}_2(\mathcal{X})}\left(  \int_{ \|y\|^2\geq M}  \|y\|^2 m (dy)  \right) \Pi(dm)\to 0   $$
by dominated convergence, since $\int_{ \|x\|^2\geq M}  \|x\|^2 m (dx) \leq \int  \|x\|^2 m (dx) = W_2^2(m,\delta_0) $ and $\Pi\in \mathcal{W}_2( \mathcal{W}_2(\mathcal{X}))$. By Vitalli's convergence Theorem, we deduce that $ (X_n, T_{\rho_n}^{\m}  (X_n) )_{n\in \NN}$  converges  to  $ (X, T_{\rho}^{\m} (X) ) $ in $L^2 (\Omega,{\cal G}, \P)$.  \textcolor{black}{In particular, as $n\to \infty$ we have
$$ \textstyle \int_{\mathcal P(\mathcal X)} W_{2}^2(\rho_n,m)\Pi(dm) =\E |X_n - T_{\rho_n}^{\m}  (X_n) |^2\to \E |X - T_{\rho}^{\m}  (X_n) |^2  =  \int_{\mathcal P(\mathcal X)} W_{2}^2(\rho,m)\Pi(dm)  ,$$
which proves  convergence i).}  Denoting  now by  ${\cal G}_{\infty}$  the sigma-field generated by  $(X_1,X_2,\dots )$ we also obtain that
\begin{equation}\label{conv cond expect}  \E ( X_n-   T_{\rho_n}^{\m}  (X_n) \vert {\cal G}_{\infty}   )   \to      \E ( X-   T_{\rho}^{\m}  (X) \vert {\cal G}_{\infty}  )   \mbox{ in } L^2 (\Omega,{\cal G}, \P). 
\end{equation}
  Observe now that the following identities hold:
 \begin{equation*}
\begin{split}
F'(\rho_n) (X_n)   =   & \,  \E  (X_n-   T_{\rho_n}^{\m}  (X_n) \vert X_n ) =    \E ( X_n-   T_{\rho_n}^{\m}  (X_n) \vert   {\cal G}_{\infty}  ) \,   \\
F'(\rho) (X)   =   & \,  \E ( X-   T_{\rho}^{\m}  (X) \vert X ) =    \E ( X-   T_{\rho}^{\m}  (X) \vert   {\cal G}_{\infty}  ). \\
\end{split}
\end{equation*} 
 The convergence ii) follows   from \eqref{conv cond expect}, that $  \E ( F'(\rho_n) (X_n))^2 =  \| F'(\rho_n)\|_{L^2(\rho_n)}$, and  $  \E ( F'(\rho) (X))^2  = \| F'(\rho)\|_{L^2(\rho)}  $.

\end{proof}

We now proceed to prove the first of our two main results.

\begin{proof}[Proof of Theorem \ref{thm:sgd-convergence}]
	Let us denote $\hat \mu$ the unique barycenter, write $\hat{F} := F(\hat{\mu})$ and introduce 
	\begin{equation*}
	 h_t := F(\mu_t) - \hat{F}\geq  0,\,\,\,\,\mbox{  and }\,\,\, \textstyle
	\alpha_t := \prod_{i=1}^{t-1}\frac{1}{1+\gamma_i^2}.
	\end{equation*}
	 Thanks to the condition in eq.~\eqref{eq:cond1-gamma} the sequence $(\alpha_t)$ converges to some finite $\alpha_\infty >0$, as it can be verified simply by applying logarithm. By Proposition \ref{prop:bound-step} we have
	\begin{align}
	\label{eq:h-diff-bound}
	 \mathbb{E}\left[h_{t+1} - (1+\gamma_t^2)h_t|\mathcal F_t\right] \leq \gamma_t^2\hat{F}-\gamma_t\lVert F^\prime(\mu_t)\rVert^2_{L^2(\mu_t)} \leq \gamma_t^2\hat{F} ,
	\end{align} 
	from where,  after multiplying by $\alpha_{t+1}$, the  following bound is derived: 
	\begin{align}
	\mathbb{E}\left[\alpha_{t+1}h_{t+1} - \alpha_t h_t |\mathcal F_t\right] &\leq \alpha_{t+1} \gamma_t^2\hat{F} -\alpha_{t+1}\gamma_t\lVert F^\prime(\mu_t)\rVert^2_{L^2(\mu_t)} \leq \alpha_{t+1} \gamma_t^2\hat{F}.\label{eq alpha h important}
	\end{align}
	Defining now $\hat{h}_t:= \alpha_t h_t - \sum_{i=1}^{t} \alpha_i \gamma_{i-1}^2  \hat{F}$, we deduce from eq.~\eqref{eq alpha h important} that
	$ \mathbb{E}\left[\hat{h}_{t+1} -\hat{h}_t |\mathcal F_t\right] \leq 0$, namely  that $(\hat{h}_t)_{t\geq 0}$ is a supermartingale w.r.t 
$(\mathcal F_t)$. {\color{black}The fact that $(\alpha_t)$ is convergent, together with the condition in eq.~\eqref{eq:cond1-gamma},   ensures that $\sum_{i=1}^{\infty} \alpha_i \gamma_{i-1}^2  \hat{F}<\infty$, thus $\hat{h}_t$ is uniformly lower bounded by a constant. Therefore, the supermartingale} convergence theorem \cite[Corollary 11.7]{Wil} implies the existence of $\hat{h}_{\infty}\in L^1$ such that $\hat{h}_t\to \hat{h}_{\infty}$ a.s. But then, necessarily, $h_t\to h_{\infty}$ a.s.\  for some nonnegative random variable $h_{\infty}\in L^1$.   
	
	Thus, our goal now is to prove that  $h_{\infty}=0$ a.s.  Taking expectations in eq.~\eqref{eq alpha h important} and summing over $t$ to obtain a telescopic summation, we obtain
	$$\textstyle \mathbb E[\alpha_{t+1}h_{t+1}]- \mathbb E[ h_0\alpha_0 ]  \leq \hat F \sum_{s=1}^t \alpha_{s+1}\gamma_s^2 - \sum_{s=1}^t \alpha_{s+1}\gamma_s \E\left[  \lVert F^\prime(\mu_s)\rVert^2_{L^2(\mu_s)}\right].$$
	Then, taking liminf, applying Fatou on the l.h.s.~and monotone convergence on the r.h.s.\ we obtain: 
	$$\textstyle -\infty < \mathbb E[\alpha_{\infty}h_{\infty}]- \E[ h_0\alpha_0 ]\leq C- \mathbb E\left[\sum_{s=1}^\infty \alpha_{s+1}\gamma_s\lVert F^\prime(\mu_s)\rVert^2_{L^2(\mu_s)}\right ].$$ 
	 In particular,  since $(\alpha_t)$ is bounded  away from $0$, we have
	\begin{align}\label{eq sum product} \textstyle
	  \sum_{t=1}^{\infty} \gamma_t\lVert F^\prime(\mu_t)\rVert^2_{L^2(\mu_t)} < \infty \,\,\,\text{a.s.}
	\end{align}
	Note that $\P( \liminf_{t\to \infty} \lVert F^\prime(\mu_t)\rVert^2_{L^2(\mu_t)}>0)>0$ would be at odds with the conditions in eqs.~\eqref{eq sum product} and \eqref{eq:cond2-gamma}, so
	\begin{equation}\label{eq:liminf0} \textstyle
	\liminf_{t\to \infty} \lVert F^\prime(\mu_t)\rVert^2_{L^2(\mu_t)}=0,\,\, \text{a.s.}
\end{equation}
	{\color{black} Observe also that, from Assumption (A2) and Lemma \ref{continuity norm F' }, we have 
	\begin{equation}\label{eq:infepsK} \textstyle 
	\forall \varepsilon>0 , \, \inf_{\{ \rho : F(\rho) \geq \hat F +\varepsilon \} \cap K_\Pi}  \,  \lVert F^\prime(\rho)\rVert^2_{L^2(\rho)}>0.  
	\end{equation}
	 Indeed, one can see  that the  set $\{ \rho : F(\rho) \geq \hat F +\varepsilon \}\cap K_\Pi $ is  $W_2$-compact, using part i) of Lemma \ref{continuity norm F' }, and then check that the function $\rho\mapsto \lVert F^\prime(\rho)\rVert^2_{L^2(\rho)}$ attains its minimun on it, using part ii) of that result. That minimum cannot be zero, as otherwise we would have obtained a Karcher mean that is not equal to the barycenter (contradicting the uniqueness of the Karcher mean). Remark also that a.s.\ $\mu_t\in K_\Pi $ for each $t$, by the geodesic convexity part of Assumption (A2). We deduce the  a.s.\ relationships between  events:
	\begin{equation*}
	\begin{split}
	\{ h_{\infty} \geq  2\varepsilon \} \subset \, &  \textstyle\left\{ \mu_t \in   \{ \rho : F(\rho) \geq \hat F +\varepsilon \} \cap K_\Pi  
	\, \forall t \mbox{ large enough} \right\} \\
	  \subset \, & \textstyle \bigcup_{\ell\in \N}     \left\{  \lVert F^\prime(\mu_t)\rVert^2_{L^2(\mu_t)} > 1/\ell :\, \forall  t \mbox{ large enough} \right\} 
	  \subset \, \textstyle \left\{ \liminf_{t\to \infty} \lVert F^\prime(\mu_t)\rVert^2_{L^2(\mu_t)}>0\right\}  ,
	 \end{split}
	 \end{equation*}}where eq.~\eqref{eq:infepsK} was used to obtain the second inclusion. It follows using  eq.~\eqref{eq:liminf0}    that $\P(h_{\infty} \geq  2\varepsilon)=0$ for every $\varepsilon>0$, hence $h_{\infty}=0$ as required.
	To conclude,  we use the fact that sequence $\{\mu_t\}_t$ is a.s.\ contained in the  $W_2$-compact  $ K_\Pi$, {\color{black} by assumption (A2), and  the first convergence in Lemma \ref{continuity norm F' }, to deduce that  the limit $\tilde{\mu}$ of any convergent subsequence  satisfies $F(\tilde{\mu})-\hat F=h_{\infty}=0$, and then $F(\tilde\mu)=\hat F$.} Hence $\tilde{\mu}= \hat{\mu}$ by uniqueness of the barycenter. This implies that  $\mu_t\to \hat{\mu}$  in $\mathcal{W}_2(\mathcal{X})$ a.s. as $t\to \infty$.
\end{proof}

\begin{remark}\label{rem:variance}
 Observe that, for  a fixed $\mu\in \mathcal{W}_{2,ac}(\mathcal{X})$ and a random $m\sim \Pi$, the random variable  $-(T_{\mu}^{m}-I)(x)$  is  an unbiased estimator of $F^\prime(\mu)(x)$,  for  $\mu(dx)$ a.e.\ $x\in \R^q$. A natural way to jointly quantify the pointwise variances of these  estimators is through the {\it integrated variance}:  
\begin{equation}\label{eq:avvar}
\mathbb{V}[-(T_{\mu}^{m}-I)]: = \int\text{Var}_{m\sim\Pi}[T_{\mu}^{m}(x)- x] \mu(dx),
\end{equation} 
which is the equivalent (for unbiased estimators) of the mean integrated square error (MISE) from non-parametric statistics \cite{wasserman2006all}. Simple computations yield the following expresion for it, which will be useful in the next two sections: 
		\begin{equation}\label{eq:VE}
			\textstyle \mathbb{V}[-(T_{\mu}^{m}-I)] =\textstyle \mathbb{E}\left[ \lVert -(T_{\mu}^{m}-I) \rVert_{L^2(\mu)}^2 \right] -   \left\lVert\mathbb{E}\left[ -(T_{\mu}^{m}-I) \right]\right\rVert_{L^2(\mu)}^2 = 2F(\mu) -  \lVert  F^\prime(\mu) \rVert_{L^2(\mu)}^2.
		\end{equation}
\end{remark}

\smallskip

We close this section with the promised statement of Lemma \ref{lem:verification_A2}, referred to in the introduction, giving us  a sufficient condition for Assumption (A2):

\begin{lemma}
    \label{lem:verification_A2}
    If the support of $\Pi $ is $\mathcal W_2(\mathbb R^d)$-compact and there is a constant $C_1<\infty$ such
     that $$\textstyle\Pi(\{m\in \mathcal W_{2,ac}(\mathbb R^d): \int \log(dm/dx)m(dx) \leq C_1\})=1,$$  then Assumption (A2) is fulfilled.
\end{lemma}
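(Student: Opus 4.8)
The plan is to realize $K_\Pi$ as a sublevel set of a small family of \emph{displacement-convex} functionals. Write $S:=\supp(\Pi)$, let $\operatorname{Ent}(m):=\int\log(dm/dx)\,m(dx)$ (with the convention $\operatorname{Ent}(m)=+\infty$ when $m\not\ll\operatorname{Leb}$), and put $C_2:=\sup_{m\in S}\int\|x\|^2\,m(dx)$, which is finite because $S$ is $W_2$-compact. Being $W_2$-compact, $S$ has uniformly integrable squared norms, i.e.\ $\lim_{R\to\infty}\sup_{m\in S}\int_{\|x\|>R}\|x\|^2\,m(dx)=0$; hence the de la Vall\'ee--Poussin criterion yields a convex increasing $\phi\colon[0,\infty)\to[0,\infty)$ with $\phi(t)/t\to\infty$ and $C_3:=\sup_{m\in S}\int\phi(\|x\|^2)\,m(dx)<\infty$. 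Set $W(x):=\phi(\|x\|^2)$, which is convex on $\mathbb R^d$ (composition of a convex increasing function with $\|\cdot\|^2$) and satisfies $W(x)/\|x\|^2\to\infty$. I would then define
\[
K_\Pi:=\Big\{m\in\mathcal P(\mathbb R^d):\ \operatorname{Ent}(m)\le C_1,\ \textstyle\int\|x\|^2\,m(dx)\le C_2,\ \int W\,dm\le C_3\Big\}.
\]
Every $m\in S$ satisfies all three constraints (for entropy, because $\{\operatorname{Ent}\le C_1\}$ is closed and of full $\Pi$-measure, hence contains $\supp\Pi$), so $S\subseteq K_\Pi$ and $\Pi(K_\Pi)=1$; and $\operatorname{Ent}(m)\le C_1<\infty$ forces $m\ll\operatorname{Leb}$, so $K_\Pi\subset\mathcal W_{2,ac}(\mathbb R^d)$. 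It remains to verify $W_2$-compactness and geodesic closedness of $K_\Pi$.

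For compactness I would use that the functionals $m\mapsto\operatorname{Ent}(m)$, $m\mapsto\int\|x\|^2\,m(dx)$ and $m\mapsto\int W\,dm$ are lower semicontinuous for narrow (hence $W_2$) convergence — the entropy by the classical lower semicontinuity of relative entropy, the other two by Fatou's lemma since $\|x\|^2$ and $W$ are nonnegative and continuous — so that $K_\Pi$ is $W_2$-closed. The bound $\int\|x\|^2\,dm\le C_2$ gives tightness of $K_\Pi$ through Markov's inequality, while $\int W\,dm\le C_3$ gives uniform integrability of squared norms: for $m\in K_\Pi$, $\int_{\|x\|>R}\|x\|^2\,m(dx)\le\big(\sup_{s> R^2}s/\phi(s)\big)\,C_3\to0$ as $R\to\infty$. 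Tightness together with uniform integrability of squared norms is exactly the criterion for $W_2$-precompactness in $\mathcal P(\mathbb R^d)$, and combined with $W_2$-closedness this makes $K_\Pi$ $W_2$-compact.

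For geodesic closedness, fix $\mu,\nu\in K_\Pi$ and $t\in[0,1]$. Since $\mu$ is absolutely continuous, $\mu_t:=((1-t)I+tT_\mu^\nu)(\mu)$ is the unique displacement geodesic from $\mu$ to $\nu$, and it is again absolutely continuous: for $t<1$ the map $(1-t)I+tT_\mu^\nu$ is the gradient of a strongly convex potential, hence has Lipschitz inverse and sends Lebesgue-null sets to Lebesgue-null sets, while for $t=1$ we have $\mu_1=\nu$ (alternatively, McCann's theorem on absolute continuity of displacement interpolants). Along $\mu_t$ the entropy is displacement convex (McCann's theorem, see \cite{ags08}), so $\operatorname{Ent}(\mu_t)\le(1-t)\operatorname{Ent}(\mu)+t\operatorname{Ent}(\nu)\le C_1$; and for any convex $V$ — in particular $V=\|\cdot\|^2$ and $V=W$ — writing $X\sim\mu$ and $Y=T_\mu^\nu(X)\sim\nu$ we get $\int V\,d\mu_t=\mathbb E\,V((1-t)X+tY)\le(1-t)\mathbb E\,V(X)+t\,\mathbb E\,V(Y)=(1-t)\int V\,d\mu+t\int V\,d\nu$, whence $\int\|x\|^2\,d\mu_t\le C_2$ and $\int W\,d\mu_t\le C_3$. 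Therefore $\mu_t\in K_\Pi$, and Assumption (A2) holds.

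The only step beyond bookkeeping is the production of the superquadratic convex weight $W$: bounds on entropy and on second moments alone do \emph{not} carve out a $W_2$-compact geodesically closed set (such a set is narrowly compact but in general not $W_2$-compact), so the work is done by the uniform integrability intrinsic to the $W_2$-compactness of $\supp(\Pi)$, repackaged via de la Vall\'ee--Poussin into a single convex function and then pushed onto all of $K_\Pi$ through the displacement convexity of $m\mapsto\int W\,dm$; the entropy hypothesis enters only to keep $K_\Pi$ inside $\mathcal W_{2,ac}(\mathbb R^d)$.
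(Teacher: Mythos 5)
Your construction is essentially the paper's: de la Vall\'ee--Poussin applied to the $W_2$-compact support produces a superquadratic convex weight, $K_\Pi$ is carved out as a sublevel set of entropy-plus-moment functionals, the superquadratic bound gives $W_2$-relative compactness, and displacement convexity of each constraint gives geodesic closedness. The one step that does not hold as you justify it is the closedness of $\{\operatorname{Ent}\le C_1\}$. The Boltzmann entropy $\operatorname{Ent}(m)=\int\log(dm/dx)\,m(dx)$ is a relative entropy with respect to Lebesgue measure, which is not a finite measure, so the ``classical lower semicontinuity of relative entropy'' does not apply; indeed $\operatorname{Ent}$ is \emph{not} narrowly lsc on $\mathcal P(\mathbb R^d)$ (move a vanishing fraction of mass into an ever more diffuse blob: the entropy tends to $-\infty$ while the sequence converges narrowly). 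The sublevel set is nonetheless $W_2$-closed, but seeing this requires exactly the paper's device: write $\operatorname{Ent}(m)=H(m|p)-\int U\,dm$ for a probability reference $p=Z^{-1}e^{-U}dx$ with $U$ of at most quadratic growth (e.g.\ a Gaussian), so that $H(\cdot|p)$ is narrowly lsc while $\int U\,dm$ is \emph{continuous} along $W_2$-convergent sequences. Your weight $W$ cannot play the role of $U$ here, because $\int W\,dm$ is superquadratic and hence only lsc under $W_2$ convergence, and a difference of two lsc functionals need not be lsc. The paper avoids the issue altogether by defining $K_\Pi$ through the single lsc constraint $H(m|p)\le C_1+C_2$ with $p\propto e^{-V(\|x\|)}dx$, rather than through $\operatorname{Ent}(m)\le C_1$ itself; the same repackaging repairs your argument (and also the claim that $\supp\Pi\subset\{\operatorname{Ent}\le C_1\}$, which relies on that closedness). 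Everything else --- the absolute continuity of the interpolants, the convexity of $t\mapsto\int V\,d\mu_t$ for convex $V$, the compactness criterion --- is correct.
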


\begin{proof}
Let $K$ be the support of $\Pi$, which is $\mathcal W_2(\mathbb R^d)$-compact. By de la Vall\'ee Poussin criterion, there is $V:\mathbb R_+\to \mathbb R_+$ increasing, convex and super-quadratic (i.e.\ $\lim_{r\to +\infty} V(r)/r^2=+\infty$) such that $C_2:=\sup_{m\in K}\int V(\|x\|)m(dx)<\infty$. Observe that $p(dx):= \exp\{-V(\|x\|)\}dx$ is a finite measure (wlog a probability measure). Moreover, for the relative entropy with respect to $p$ we have
$H(m|p):=\int \log(dm/dp)dm= \int  \log(dm/dx)m(dx) + \int V(\|x\|)m(dx) $ if $m\ll dx$ and $+\infty $ otherwise. Define now $K_\Pi:=\{m\in \mathcal W_{2,ac}(\mathbb R^d): H(m|p)\leq C_1+C_2,\, \int V(\|x\|)m(dx) \leq C_2 \}$ so that $\Pi(K_\Pi)=1$. Clearly $K_\Pi$ is $\mathcal W_2$-closed, since the relative entropy is weakly lower semicontinuous, and also $\mathcal W_2$-relatively compact, since $V$ is super-quadratic. Finally $K_\Pi$ is geodesically convex by \cite[Theorem 5.15]{villani2003topics}.
\end{proof}

{For instance if all $m$ in the support of $\Pi$ is of the form $m(dx)=\frac{e^{-V_m(x)}}{\int_y e^{-V_m}(y)dy }dx$ with $V_m$ bounded from below,  then one way to guarantee the conditions in Lemma \ref{lem:verification_A2}, assuming w.l.o.g.\ that $V_m\geq 0$,  is to ask that $\int_y e^{-V_m}(y)dy \geq A $ and $\int_y |y|^{2+\epsilon}e^{-V_m}(y)dy\leq B$, for some  fixed $\epsilon,A,B >0$. In words: tails are controlled and the measures cannot be too concentrated. }

\section{A condition granting uniqueness of Karcher means and convergence rates }
\label{sec unique Karcher SGD rates}
The aim of this  section is to prove Theorem \ref{thm:uKarcherSGDrate},  a  refinement of Theorem \ref{thm:sgd-convergence} under the additional assumption that the barycenter is a pseudo-associative Karcher mean.  Notice that,  with the notation introduced in Section \ref{sec stoch grad},  Definition \ref{def:pseudo-asoc} of  pseudo-associative Karcher mean $\mu$ simply reads as
\begin{equation}\label{eq:WCF}
    W_2^2(\mu, \nu)\leq C_{\mu} \lVert F^\prime(\nu) \rVert^2_{L^2(\nu)}
\end{equation}
for all $\nu  \in  \mathcal W_{2,ac}({\cal X})$.

\begin{remark}\label{rem:assocKarcher}
    Suppose $\mu$ is a Karcher mean of $\Pi$ such that, for all $\nu\in \mathcal W_{2,ac}({\cal X}) $ and $\Pi(dm)$ a.e. $m$ one has 
\begin{equation}\label{eq:associativeTTT} T_{\mu}^{m}(x) = T_{\nu}^m \circ T_{\mu}^{\nu} (x), \quad \mu(dx)\,  \mbox{ 
  a.e. } x   \in \RR^q.
  \end{equation}
  Then, $\mu$ is pseudo-associative, with $C_{\mu}=1$ and equality holding in   Definition  \ref{def:pseudo-asoc}. Indeed, in that case we have: 
  \begin{equation}\label{eq:asocimplypseudo}
  \begin{split}
     \textstyle \lVert F^\prime(\nu) \rVert^2_{L^2(\nu)} = &  \textstyle  \,   \int_{{\cal X}} \left |     \int_{\mathcal P(\mathcal X)} ( T_{\nu}^m \circ T_{\mu}^{\nu} (x)-T_{\mu}^{\nu} (x) )\Pi(dm) \right |^2 \mu(dx)  \\ 
     = &  \textstyle  \,   \int_{{\cal X}} \left |       \int_{\mathcal P(\mathcal X)} 
 T_{\mu}^{m} (x) \Pi(dm) -T_{\mu}^{\nu} (x)  \right |^2 \mu(dx)  \\ 
 = & \,   \textstyle   \int_{{\cal X}} \left |      x- T_{\mu}^{\nu} (x)  \right |^2 \mu(dx) = W_2^2(\mu,\nu) \\ 
       \end{split}
  \end{equation}
We thus will say that the Karcher mean $\mu$ is {\it associative} if simply   \eqref{eq:associativeTTT}  holds. 
\end{remark}

The following is an  analogue in Wasserstein space, of a classical property implying convergence rates in gradient-type optimization algorithms:

\begin{definition} We say that $\mu \in   \mathcal W_{2,ac}({\cal X})$ satisfies a Polyak-Lojasiewicz inequality if 
    \begin{equation}\label{eq:PL} \textstyle 
  F(\nu)-F(\mu)\leq \frac{\bar{C}_{\mu}}{2}  \lVert F^\prime(\nu) \rVert^2_{L^2(\nu)}
\end{equation}
for some $\bar{C}_{\mu}>0$ and every  $\nu \in   \mathcal W_{2,ac}({\cal X})$.
\end{definition}

We next state some useful properties:

\begin{lemma}\label{lemma:inequalities}
Suppose $\mu$ is a Karcher mean of $\Pi$. Then:
\begin{itemize}
    \item[i)]For all $\nu \in  \mathcal W_{2,ac}({\cal X})$    we have
\begin{equation}\label{FFW}  \textstyle 
    F(\nu)-F(\mu)\leq \frac{1}{2} W_2^2(\mu,\nu). 
\end{equation}
\item[ii)] If $\mu$ is pseudo-associative, then it is the unique barycenter, and it satisfies the Polyak- Lojasiewicz inequality \eqref{eq:PL} with $\bar{C}_{\mu}=C_{\mu}$. 
\item[iii)] If the associativity relation \eqref{eq:associativeTTT} holds, then we have
 \begin{equation*} \textstyle  F(\nu)-F(\mu)=\frac{1}{2} W_2^2(\mu,\nu)=  \frac{1}{2}  \lVert F^\prime(\nu) \rVert^2_{L^2(\nu)} .  
 \end{equation*}
 \end{itemize}
\end{lemma}

\begin{proof}
    i) { We notice that this is a particular case of Theorem 7 in \cite{chewi2020gradient},  which we can prove by an elementary argument based on the notion of Karcher mean. Indeed, by definition of  function $F$  and the fact that} $(T_{\mu}^{\nu}, T_{\mu}^m)( \mu)$ is a coupling of $\nu$ and $m$ we have
    \begin{equation*}
        \begin{split}
            F(\nu)-F(\mu)\leq & \textstyle  \, \frac{1}{2} \int_{{\cal P}({\cal X}) } \int_{{\cal X}} \left\{ |T_{\mu}^{\nu}(x)-T_{\mu}^m(x) |^2- |x-T_{\mu}^m(x) |^2   \right\} \mu(dx) \Pi(dm) \\ 
        = & \textstyle  \, \frac{1}{2} \int_{{\cal P}({\cal X}) } \int_{{\cal X}} \left\{|T_{\mu}^{\nu}(x)|^2 -2 \langle T_{\mu}^m(x) ,T_{\mu}^{\nu}(x)\rangle - x^2+2 \langle x, T_{\mu}^m(x) \rangle \right\} \mu(dx) \Pi(dm) \\
        = & \textstyle  \, \frac{1}{2}  \int_{{\cal X}} |  T_{\mu}^{\nu}(x) - x|^2  \mu(dx) 
        \end{split}
    \end{equation*}
    where in the second equality we used twice  the fact that $\int_{{\cal P)}({\cal X}) }  T_{\mu}^{m}(x) \Pi(dm) = x$ for $\mu(dx)$ a.e.\ $x$. 
    
    ii) The claim is obvious in view of the previous and of \eqref{eq:WCF}. 
    
    iii) Taking into account Remark \ref{rem:assocKarcher}, it is enough to notice that 
    $$ \textstyle   F(\nu)= \int_{{\cal P}({\cal X}) } \int_{{\cal X}} |y -T_{\nu}^m(y) |^2\nu(dy) \Pi(dm)= \int_{{\cal P}({\cal X}) } \int_{{\cal X}} |T_{\mu}^{\nu}(x)-T_{\mu}^m(x) |^2\mu(dx) \Pi(dm)   $$
    under \eqref{eq:associativeTTT},   in which case the inequality in the proof of i) is an equality. 
    
\end{proof}

\begin{remark}\label{rem:VI+PLassocKarcher} Recall that $\Pi$ is said to satisfy a variance inequality \cite{ACLGP} if for some constant $C>0$,  
$$ \textstyle  F(\nu)-F(\hat\mu)\geq \frac{C}{2} W_2^2(\hat\mu,\nu),$$
with $\hat{\mu}$ the barycenter of $\Pi$. It readily follows that $\hat\mu$ is a pseudo-associative Karcher mean as soon as a variance inequality and the Polyak- Lojasiewicz inequality
\ref{eq:PL} hold. That was the case in \cite{chewi2020gradient}, and so the barycenter in the Gaussian setting considered therein is  a pseudo-associative Karcher mean too. {Notice that if the barycenter is associative, by Lemma \ref{lemma:inequalities}.(iii)  the variance inequality holds; this is the case of the examples discussed in Sections \ref{Ex:1dim}-\ref{Ex:sphere} below.}   { Notice also that, if a variance inequality holds, bounds on the optimality gap for the SDG sequence as in Theorem \ref{thm:uKarcherSGDrate} immediately yield similar bounds for the expected squared Wasserstein distance $\mathbb{E} [W_2^2(\mu_k,\hat{\mu}) ]$. } 

\end{remark}

\begin{proof}[Proof of Theorem \ref{thm:uKarcherSGDrate}]
If we assume the pseudo-associtativity condition  \eqref{eq:pseudo-asoc_ineq}  holds, 
uniqueness of Karcher means (hence equal to the barycenter) is immediate,  as noted in Section \ref{sec:SGDW}.
Under that assumption, the inequality \eqref{eq:PL} holds  thanks to  part ii) of Lemma \ref{lemma:inequalities}  and convergence estimates can then be deduced adapting classic arguments of the SGD algorithm in an Euclidean setting, see e.g. \cite{BCN2018}. 
Indeed, by Proposition \ref{prop:bound-step} and formulae \eqref{eq:VE} and \eqref{eq:PL}, we get that  
\begin{equation*}
	\begin{split}
	    \mathbb{E}\left[F(\mu_{k+1}) - F(\mu_k)|\mathcal F_{k}\right] 
     \leq &  \, \gamma_k^2F(\mu_k) - \gamma_k\lVert F^\prime(\mu_k)\rVert^2_{L^2(\mu_k)} \\
      = & \frac{\gamma_k^2}{2} \textstyle \mathbb{V}[-(T_{\mu_k}^{m}-I)]  - (1- \frac{\gamma_k}{2}) \, \gamma_k \, \lVert F^\prime(\mu_k)\rVert^2_{L^2(\mu_k)} \\ 
      \leq &  
    \gamma_k^2  \bar{F}  -  (1- \frac{\gamma_k}{2}) \frac{2\gamma_k }{C_{\mu}}  (F(\mu_k) - \hat{F} )  \\
     \leq &  
    \gamma_k^2  \bar{F}  -   \gamma_k C_{\mu}^{-1}  (F(\mu_k) - \hat{F} )  
	\end{split}
	\end{equation*}
for some finite (deterministic) upper bound $\bar{F}>0 $   of the sequence $(F(\mu_t))_{t\geq 1}$  under (A2'). In the last line we used the fact that  $\gamma_k\leq \gamma_0\leq 1$  for the choice $\gamma_k=\frac{a}{b+k}$ with fixed $b\geq a>0$. 
It follows that 
\begin{equation}\label{eq:sgd-ineq_pseudo-asoc}
 \textstyle \mathbb{E}\left[F(\mu_{k+1}) - \hat{F} 
 \right] \leq  \gamma_k^2\bar{F} + (1-\gamma_kC_{\mu}^{-1}) \mathbb{E}\left[F(\mu_{k}) - \hat{F} \right] \quad  \mbox{ for all }k\in \NN. 
\end{equation}
 Assume now that $ a>C_{\hat \mu}$ and that, for certain $c\geq a^2 \bar{F}/(C_{\hat \mu}^{-1}a-1)>0$,  it holds for a given $k\in \NN$ that 
\begin{equation}\label{eq:hypinduc}
 \textstyle \mathbb{E}\left[F(\mu_{k}) - \hat{F} \right]\leq c/(b+k).
\end{equation}
Let us show then that   $\mathbb{E}\left[F(\mu_{k+1}) - \hat{F} \right]\leq c/(b+k+1)$ too. By \eqref{eq:sgd-ineq_pseudo-asoc} we have  
\begin{equation*}
\begin{split}
 \textstyle \mathbb{E}\left[F(\mu_{k+1}) - \hat{F} 
 \right] \leq &  \textstyle  \, c\frac{(b+k-1)}{(b+k)^2}+ \frac{c(1- aC_{\mu}^{-1})+ a^2 \bar{F}}{(b+k)^2} \\
 \leq & \textstyle  \,  c\frac{(b+k-1)}{(b+k)^2} \\
 \leq & \textstyle  \,  \frac{c}{b+k+1}, \\
 \end{split}
\end{equation*}
where we used the choice of $c$ in the second inequality. Taking $c=\max\left\{ b \,  \mathbb{E}\left[F(\mu_0) - \hat{F} \right] ,a^2 \bar{F}/(C_{\hat\mu}^{-1}a-1)\right\} $, inequality  \eqref{eq:hypinduc} holds for $k=0$, hence for all $k\in \NN$ by induction. 

{ Concerning the a.s.\ weak-convergence of $\mu_k$ to $\hat \mu$, we can follow the proof of Theorem \ref{thm:sgd-convergence} up to Equation \eqref{eq:liminf0}, and then derive in the present setting \eqref{eq:infepsK}, i.e. 
\begin{align*} \textstyle 
	\forall \varepsilon>0 , \, \inf_{\{ \rho : F(\rho) \geq \hat F +\varepsilon \} \cap K_\Pi}  \,  \lVert F^\prime(\rho)\rVert^2_{L^2(\rho)}>0,  
	\end{align*}
 from the Polyak- Lojasiewicz inequality \eqref{eq:PL}, which holds thanks to Lemma \ref{lemma:inequalities}, (ii). As before, this is then used to obtain that $F(\mu_k)\to \hat F$ almost surely. Since $K_\Pi$ is $\mathcal W_2$-bounded, it is in particular tight. The fact that  $\nu\mapsto F(\nu)$ is weakly l.s.c.  and the uniqueness of minimizers of $F$  entail now that a.s.\ $\mu_k\to\hat\mu$ weakly.}
\end{proof} 

{ 
\begin{remark} In addition to concluding that a.s.\ $\mu_k\to\hat\mu$ weakly, the above proof also establishes the a.s.\ existence of a subsequence $n_k$ such that $\mu_{n_k}\to\hat\mu$ in $\mathcal W_2$. Indeed, this follows from \eqref{eq:liminf0} together with \eqref{eq:WCF}.   
\end{remark}}

 The above proof shows that we may relax the definition of pseudo-associativity by requiring \eqref{eq:pseudo-asoc_ineq} to hold for $\nu\in K_\Pi$ (or more precisely, for $\nu\in\{\mu_k:k\in\mathbb N\}$) only.

\section{Variance reduction via batch SGD}
\label{sec:generalization-sgd}

Paralleling the Euclidean setting, the function  $-(T_{\mu_k}^{m_k}-I)$  can be seen as an estimator of the gradient in the $k-$th step of the SGD scheme.  Hence,  conditionally on $m_0,\dots, m_{k-1}$, its integrated variance  given by $2F(\mu_k) -  \lVert  F^\prime(\mu_k) \rVert_{L^2(\mu_k)}^2$ could be large for some  sampled $\mu_k$, which would yield a slow convergence if the steps $\gamma_k$ are not small enough. To cope with this issue, as customary in stochastic algorithms, we are lead to consider alternative estimators of $F^\prime(\mu)$ with less (integrated) variance:

\begin{definition}
	Let $\mu_0 \in K_\Pi$, $m_k^i \stackrel{\text{iid}}{\sim}  \Pi$, and $\gamma_k > 0$ for $k \geq 0$ and $i=1,\dots,S_k$. The batch stochastic gradient descent (BSGD) sequence is given by 
	\begin{align}		\label{eq:sgd-batch-seq} \textstyle
	\textstyle\mu_{k+1} := \left[(1-\gamma_k)I + \gamma_k \frac{1}{S_k}\sum_{i=1}^{S_k} T_{\mu_k}^{m_k^i}\right](\mu_k).
	\end{align}
\end{definition}

Notice that $\frac{1}{S_k}\sum_{i=1}^{S_k} T_{\mu_k}^{m_k^i} - I$ is an unbiased estimator of $-F^\prime(\mu_k)$. {\color{black} Proceeding as in Proposition \ref{prop:bound-step}, with  $\mathcal F_{k+1}$ now denoting the sigma-algebra generated by $\{m_\ell^i:\, \ell\leq k,\,i\leq S_k\}$, and  writing $\Pi$ also for the law of an i.i.d.~sample of size $S_k$}, we now  have
\begin{align*}
\mathbb{E}\left[F(\mu_{k+1})|\mathcal F_k\right] =&\textstyle F(\mu_k)+\gamma_k\langle  F^\prime(\mu_k), \int \left( \frac{1}{S_k}\sum_{i=1}^{S_k} T_{\mu_k}^{m_k^i} - I \right) \,\Pi(dm_k^1\cdots dm_k^{S_k})\rangle_{L^2(\mu_k)} \\&+\textstyle  \frac{\gamma_k^2}{2}\int\lVert \frac{1}{S_k}\sum_{i=1}^{S_k} T_{\mu_k}^{m_k^i} - I \rVert^2_{L^2(\mu_k)}\Pi(dm_k^1\cdots dm_k^{S_k})\\
=&\textstyle F(\mu_k) - \gamma_k\lVert F^\prime(\mu_k) \rVert^2_{L^2(\mu_k)} + \frac{\gamma_k^2}{2}\int\lVert \frac{1}{S_k}\sum_{i=1}^{S_k} T_{\mu_k}^{m_k^i} - I \rVert^2_{L^2(\mu_k)}\Pi(dm_k^1\cdots dm_k^{S_k})\\
\leq&\textstyle F(\mu_k) - \gamma_k\lVert F^\prime(\mu_k) \rVert^2_{L^2(\mu_k)} + \frac{\gamma_k^2}{2} \frac{1}{S_k}\sum_{i=1}^{S_k}\int\lVert T_{\mu_k}^{m_k^i} - I \rVert^2_{L^2(\mu_k)}\Pi(dm_k^i)\\
=&\textstyle(1+\gamma_k^2)F(\mu_k) - \gamma_k\lVert F^\prime(\mu_k) \rVert^2_{L^2(\mu_k)}.
\end{align*}
The arguments in the proof of Theorem \ref{thm:sgd-convergence} can then be easily adapted to get :

\begin{theorem}
	\label{thm:sgd-batch-convergence}
	Under the assumptions of Theorem \ref{thm:sgd-convergence} the BSGD sequence $\{\mu_t\}_{t\geq0}$ in equation~\eqref{eq:sgd-batch-seq} converges a.s.\ to the 2-Wasserstein barycenter of $\Pi$.
\end{theorem}

The supporting idea for using mini-batches is \emph{reducing the noise} of the estimator of $F^\prime(\mu)$, namely: 
\begin{proposition}\label{prop batch noise reduction}
	The integrated variance of {\color{black} the mini batch estimator  of fixed batch size $S$ for $F^\prime(\mu)$, given  by} $-\frac{1}{S}\sum_{i=1}^{S}(T_{\mu}^{m_i}-I)$, decreases linearly in the sample size. More precisely, we have $$\textstyle\mathbb{V}[-\frac{1}{S}\sum_{i=1}^{S}(T_{\mu}^{m_i}-I)] = \textstyle \frac{1}{S} \mathbb{V}[-(T_{\mu}^{m_1}-I)] = \frac{1}{S}\left[2F(\mu) - \lVert  F^\prime(\mu) \rVert_{L^2(\mu)}^2\right].$$
	\begin{proof}
		
	{\color{black}	In a similar way as shown in eq.~\eqref{eq:VE}, the integrated variance of the mini-batch estimator  is} 
		\begin{align*}
		\textstyle\mathbb{V}\left[-\frac{1}{S}\sum_{i=1}^{S}(T_{\mu}^{m_i}-I)\right] =& \textstyle \mathbb{E}\left[ \left\lVert -\frac{1}{S}\sum_{i=1}^{S}(T_{\mu}^{m_i}-I) \right\rVert_{L^2(\mu)}^2 \right] -   \left\lVert\mathbb{E}\left[ -\frac{1}{S}\sum_{i=1}^{S}(T_{\mu}^{m_i}-I) \right]\right\rVert_{L^2(\mu)}^2 \\
		=&\textstyle \mathbb{E}\left[ \left\lVert -\frac{1}{S}\sum_{i=1}^{S}(T_{\mu}^{m_i}-I) \right\rVert_{L^2(\mu)}^2 \right] - \left\lVert  F^\prime(\mu)\right \rVert_{L^2(\mu)}^2\,.
		\end{align*}
		The first term  can be expanded as
		\begin{align*}
		\textstyle\left\lVert -\frac{1}{S}\sum_{i=1}^{S}(T_{\mu}^{m_i}-I) \right\rVert_{L^2(\mu)}^2 =& \textstyle \frac{1}{S^2}\langle \sum_{i=1}^{S}(T_{\mu}^{m_i}-I),\sum_{j=1}^{S}(T_{\mu}^{m_j}-I)\rangle_{L^2(\mu)}\\
		=&\textstyle \frac{1}{S^2}\sum_{i=1}^{S}\sum_{j=1}^{S}\langle T_{\mu}^{m_i}-I,T_{\mu}^{m_j}-I\rangle_{L^2(\mu)}\\
		=& \textstyle\frac{1}{S^2}\sum_{i=1}^{S}\lVert -(T_{\mu}^{m_i}-I) \rVert_{L^2(\mu)}^2 + \frac{1}{S^2}\sum_{j \neq i}^{S}\langle T_{\mu}^{m_i}-I,T_{\mu}^{m_j}-I\rangle_{L^2(\mu)}.
		\end{align*}
		By taking expectation, as the samples $m_i \sim \Pi$ are independent, we get
		\begin{align*}
		\textstyle\mathbb{E}\left[ \lVert -\frac{1}{S}\sum_{i=1}^{S}(T_{\mu}^{m_i}-I) \rVert_{L^2(\mu)}^2 \right]= &\textstyle \frac{1}{S^2}\sum_{i=1}^{S}\mathbb{E}\left[ W_2^2(\mu,m_i) \right] + \frac{1}{S^2}\sum_{j \neq i}^{S}\langle \mathbb{E}\left[ T_{\mu}^{m_i}-I\right], \mathbb{E}\left[T_{\mu}^{m_j}-I\right]\rangle_{L^2(\mu)}\\
		=&\textstyle \frac{2}{S^2}\sum_{i=1}^{S}F(\mu) + \frac{1}{S^2}\sum_{j \neq i}^{S}\langle F^\prime(\mu), F^\prime(\mu)\rangle_{L^2(\mu)} =\frac{2}{S}F(\mu) + \frac{S-1}{S}\lVert  F^\prime(\mu) \rVert_{L^2(\mu)}^2.	
		\end{align*}
  Subtracting $\lVert  F^\prime(\mu) \rVert_{L^2(\mu)}^2$ yields the asserted identities. 
	\end{proof}
\end{proposition}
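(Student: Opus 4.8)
The plan is to carry out, in the Hilbert space $L^2(\mu)$, the classical computation of the variance of an empirical mean of i.i.d.\ terms. The summands $-(T_\mu^{m_i}-I)$, $i=1,\dots,S$, are i.i.d.\ $L^2(\mu)$-valued random variables: each has mean $F'(\mu)$ by eq.~\eqref{eq:F'SGD}, and each has integrated second moment $\mathbb{E}\big[\lVert T_\mu^{m_i}-I\rVert^2_{L^2(\mu)}\big]=\mathbb{E}\big[W_2^2(\mu,m_i)\big]=2F(\mu)$, the last identity being Brenier's theorem as already invoked in the proof of Lemma~\ref{measurability fixed point}. Note $2F(\mu)<\infty$ since $\mu\in\mathcal W_{2,ac}(\mathcal X)$ and $\Pi\in\mathcal W_2(\mathcal W_2(\mathcal X))$.

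First I would expand $\big\lVert -\tfrac1S\sum_{i=1}^S (T_\mu^{m_i}-I)\big\rVert^2_{L^2(\mu)}$ bilinearly into the $S$ diagonal terms $\tfrac1{S^2}\lVert T_\mu^{m_i}-I\rVert^2_{L^2(\mu)}$ and the $S(S-1)$ cross terms $\tfrac1{S^2}\langle T_\mu^{m_i}-I,\,T_\mu^{m_j}-I\rangle_{L^2(\mu)}$ with $i\neq j$. Taking expectations and using independence of the $m_i$, each diagonal term contributes $\tfrac1{S^2}\cdot 2F(\mu)$, while each cross term factors as $\tfrac1{S^2}\langle \mathbb{E}[T_\mu^{m_i}-I],\mathbb{E}[T_\mu^{m_j}-I]\rangle_{L^2(\mu)}=\tfrac1{S^2}\lVert F'(\mu)\rVert^2_{L^2(\mu)}$. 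Summing gives $\mathbb{E}\big[\lVert -\tfrac1S\sum_i (T_\mu^{m_i}-I)\rVert^2_{L^2(\mu)}\big]=\tfrac2S F(\mu)+\tfrac{S-1}{S}\lVert F'(\mu)\rVert^2_{L^2(\mu)}$. Subtracting $\big\lVert \mathbb{E}[-\tfrac1S\sum_i (T_\mu^{m_i}-I)]\big\rVert^2_{L^2(\mu)}=\lVert F'(\mu)\rVert^2_{L^2(\mu)}$, exactly as in the derivation of eq.~\eqref{eq:VE}, yields $\mathbb{V}\big[-\tfrac1S\sum_i (T_\mu^{m_i}-I)\big]=\tfrac1S\big(2F(\mu)-\lVert F'(\mu)\rVert^2_{L^2(\mu)}\big)$, which is $\mathcal O(1/S)$.

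I do not expect any genuine obstacle: the statement is the Hilbert-space analogue of the elementary fact that averaging $S$ i.i.d.\ copies divides the variance by $S$. The only points deserving a line of care are the Fubini/Tonelli justification for interchanging expectation with the spatial integration defining $\langle\cdot,\cdot\rangle_{L^2(\mu)}$, which is licit because $\int\!\int\lVert x\rVert^2 m(dx)\,\Pi(dm)<\infty$, and the observation that the constant $2F(\mu)-\lVert F'(\mu)\rVert^2_{L^2(\mu)}$ appearing in the bound is finite and nonnegative, the latter because it equals the (nonnegative) integrated variance $\mathbb{V}[-(T_\mu^m-I)]$ of a single-sample estimator via eq.~\eqref{eq:VE}.
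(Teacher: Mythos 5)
Your proposal is correct and follows essentially the same route as the paper's proof: the variance decomposition $\mathbb{V}=\mathbb{E}[\lVert\cdot\rVert^2_{L^2(\mu)}]-\lVert\mathbb{E}[\cdot]\rVert^2_{L^2(\mu)}$, the bilinear expansion into $S$ diagonal terms (each contributing $2F(\mu)/S^2$ via Brenier) and $S(S-1)$ cross terms (each factoring by independence into $\lVert F'(\mu)\rVert^2_{L^2(\mu)}/S^2$), and the final cancellation giving $\tfrac1S\bigl(2F(\mu)-\lVert F'(\mu)\rVert^2_{L^2(\mu)}\bigr)$. Your added remarks on the Fubini justification and on the nonnegativity of the constant are sensible touches the paper leaves implicit, but they do not change the argument.
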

The mini-batch implementation is easily seen to inherit convergence estimates established in Theorem \ref{thm:uKarcherSGDrate}:

\begin{theorem}\label{thm:uKarcherBSGDrate}
Under the assumptions of  Theorem \ref{thm:uKarcherSGDrate}, 
the BSGD sequence $\{\mu_k\}_k$ in eq.~\eqref{eq:sgd-batch-seq} is a.s.~convergent to $\hat{\mu}\in K_\Pi$ as soon as \eqref{eq:cond1-gamma} and \eqref{eq:cond2-gamma}  hold. Moreover, for every $a>C_{\hat{\mu}}^{-1}$ and $b\geq a$ there exists an explicit constant  $C_{a,b}>0$ such that, if $\gamma_k=\frac{a}{b+k}$ for all $k\in \NN$, the expected optimallity gap satisfies: 
$$ \textstyle \mathbb{E}\left[F(\mu_{k}) - F(\hat{\mu}) \right] \leq \frac{C_{a,b}}{b+k} \, .$$
\end{theorem}

\section{SGD for closed-form Wasserstein barycenters}
\label{sec special cases}

As discussed in the introduction, recent developments  enable  the  approximated computation of  optimal transport maps between two given absolutely-continuous distributions,  and hence the practical implementation of the SGD in fairly general cases is in principle feasible. We will not address  this issue in the present work, but rather content ourselves with analysing some families of models considered in \cite{cuestaalbertos1993optimal, alvarez2018wide}, for which this additional algorithmic aspect can be avoided,  their optimal transport maps being explicit and easy to evaluate.  Further, we will examine some of22  their closure properties which are preserved under the operation of \emph{taking barycenter}. This is important, for instance, in the context of the statistical application \cite{backhoff2018bayesian}, wherein $\Pi$ really represents a posterior distribution on models and its barycenter is postulated as a useful representative of the posterior distribution on models. It is hence desirable that the representative model shares some of the properties of all the models charged by the posterior. 

{ In the settings that will be presented, the pseudo-associativity condition \eqref{eq:pseudo-asoc_ineq} is either satisfied or conditions ensuring it can be given explicitly. Convergence bounds as in  Theorem \ref{thm:uKarcherSGDrate} can be easily deduced in those cases for each specification of ${\Pi}$ satisfying the required assumptions.}

\subsection{Univariate distributions}\label{Ex:1dim}
We assume that $m$ is a continuous distribution over $\mathbb{R}$, 
and denote respectively by $F_m$ and $Q_m:=F_m^{-1}$ its cumulative distribution function and its right-continuous quantile function. {\color{black} The increasing transport map from a continuous $m_0$ to $m$, also known as the monotone rearrangement, is given by $$T_{m_0}^m(x) = Q_{m}(F_{m_0}(x)),$$  and is known to be $p$-Wasserstein  optimal for $p\geq  1$ (see \cite[Remark 2.19(iv)]{villani2003topics}).} 
Given $\Pi$, the barycenter $\hat{m}$ is also independent of $p$ and characterized via its quantile, i.e.,
\begin{equation}\label{eq:quantcharac}
\textstyle Q_{\hat{m}}(\cdot) = \int Q_m(\cdot) \Pi(dm).
\end{equation}
{In words: the quantile function of the barycenter is equal to the average quantile function.}
Our SGD  iteration, starting from a distribution function $F_{\mu}(x)$, sampling some $m \sim \Pi$, and with step $\gamma$, produces the measure $$\nu =  ((1-\gamma)I + \gamma T_\mu^m)(\mu). $$ This is characterized by its quantile function $$Q_{\nu}(\cdot) = (1-\gamma)Q_{\mu}(\cdot) + \gamma Q_m(\cdot).$$ The batch stochastic gradient descent iteration equals $Q_{\nu}(\cdot) = (1-\gamma)Q_{\mu}(\cdot) + \frac{\gamma}{S} \sum_{i=1}^{S} Q_{m^{i}}(\cdot)$.

{As explained in Remark \ref{rem:assocKarcher}, the barycenter is automatically pseudo-associative, since transport maps (i.e.\  increasing functions) are in this case associative in the sense discussed therein, hence Theorem  \ref{thm:uKarcherSGDrate} applies. Moreover by Remark \ref{rem:VI+PLassocKarcher} it entails convergence bounds in $W_2^2$ for the SGD sequence itself. }

Interestingly, the model average $\bar{m}:=\int m\Pi(dm)$ is characterized by the \emph{averaged cumulative distribution function}, i.e., $F_{\bar{m}}(\cdot) = \int F_m(\cdot) \Pi(dm)$. The model average does not preserve intrinsic \emph{shape} properties from the distributions such as symmetry or unimodality. For example, if $\Pi = 0.3*\delta_{m_1} + 0.7*\delta_{m_2}$ with $m_1 = \mathcal{N}(1,1)$ and $m_2=\mathcal{N}(3,1)$, the \emph{model average} is an asymmetric bimodal distribution with modes on $1$ and $3$, while the Wasserstein barycenter is the Gaussian distribution {\color{black}$\hat m = \mathcal{N}(2.4,1)$. }

The following reasoning {\color{black} illustrates} the fact that Wasserstein barycenters preserve \emph{geometric properties} in a way that e.g.\ the model average does not. {\color{black}  A continuous distribution $m$ on $\mathbb{R}$ is unimodal with a mode on $\tilde{x}_m$ if its quantile function $Q(y)$ is concave for $y < \tilde{y}_m$ and convex for $y > \tilde{y}_m$, where $Q(\tilde{y}_m) = \tilde{x}_m$. Likewise, $m$ is symmetric w.r.t.  $x_m \in \mathbb{R}$ if $Q(\frac{1}{2}+y) = 2x_m - Q(\frac{1}{2}-y)$ for $y \in [0,\frac{1}{2}]$ (note that $x_m =Q_m(\frac{1}{2})$)}. These properties  can be analogously described in terms of the function $F_m$. Let us show that the barycenter preserves unimodality/symmetry:


\begin{proposition}\label{prop:symmetric-barycenter}
	If $\Pi$ is concentrated on continuous symmetric (resp.\ symmetric unimodal) univariate distributions, then the barycenter $\hat{m}$ is symmetric (resp.\ symmetric unimodal).
\end{proposition}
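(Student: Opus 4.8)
The plan is to work entirely at the level of quantile functions, using the characterization \eqref{eq:quantcharac}, namely $Q_{\hat m}(\cdot) = \int Q_m(\cdot)\,\Pi(dm)$, together with the quantile reformulations of symmetry and unimodality recalled just above the statement. The key observation is that both of these shape properties are expressed as \emph{linear} (or affine) conditions on $Q_m$ once one is careful about the reference point, and linear conditions are preserved under the averaging $Q_m \mapsto \int Q_m\,\Pi(dm)$.

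For the symmetric case: by hypothesis, $\Pi(dm)$-a.e.\ $m$ satisfies $Q_m(\tfrac12 + y) = 2x_m - Q_m(\tfrac12 - y)$ for $y\in[0,\tfrac12]$, with $x_m = Q_m(\tfrac12)$. Integrating this identity against $\Pi(dm)$ and using \eqref{eq:quantcharac} on both sides gives
\begin{align*}
Q_{\hat m}\bigl(\tfrac12 + y\bigr) = \int Q_m\bigl(\tfrac12+y\bigr)\Pi(dm) = 2\!\int x_m\,\Pi(dm) - \int Q_m\bigl(\tfrac12-y\bigr)\Pi(dm) = 2\bar x - Q_{\hat m}\bigl(\tfrac12-y\bigr),
\end{align*}
where $\bar x := \int x_m\,\Pi(dm) = \int Q_m(\tfrac12)\,\Pi(dm) = Q_{\hat m}(\tfrac12)$. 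So $\hat m$ is symmetric about $Q_{\hat m}(\tfrac12)$. One should check that the interchange of integral and the identity is legitimate: this is immediate since all the $Q_m$ are integrable against $\Pi$ (the second-moment/compactness assumptions guarantee $\int |Q_m(y)|\,\Pi(dm)<\infty$ for a.e.\ $y$, in fact for all $y$ by monotonicity and the $L^2$ bound), and the relation is a plain pointwise identity in $m$.

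For the unimodal case, one must additionally track concavity/convexity. Here the subtlety is that the ``mode level'' $\tilde y_m$ can vary with $m$, so the decomposition ``$Q_m$ concave on $[0,\tilde y_m]$, convex on $[\tilde y_m,1]$'' does not directly integrate. However, under the \emph{symmetric} unimodal hypothesis the mode level is forced: a symmetric distribution has its mode at the center, so $\tilde y_m = \tfrac12$ for every $m$ in the support of $\Pi$. Thus every $Q_m$ is concave on $[0,\tfrac12]$ and convex on $[\tfrac12,1]$, and since a (finite, or more generally measurable-in-$m$) average of concave functions is concave and an average of convex functions is convex, $Q_{\hat m} = \int Q_m\,\Pi(dm)$ is concave on $[0,\tfrac12]$ and convex on $[\tfrac12,1]$; combined with the symmetry already established, this says exactly that $\hat m$ is symmetric unimodal with mode at $Q_{\hat m}(\tfrac12)$. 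For the averaging-preserves-concavity step in the infinite-support case one invokes the standard fact (or, if one prefers, checks the defining inequality $Q_m(\lambda a + (1-\lambda)b) \ge \lambda Q_m(a) + (1-\lambda)Q_m(b)$ pointwise in $m$ and integrates against $\Pi$), noting measurability of $(m,y)\mapsto Q_m(y)$.

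The main obstacle is not any hard estimate but rather the bookkeeping around the reference point and the mode level: one has to be honest that the naive ``concave-then-convex'' characterization of unimodality is only stable under averaging when the switch-over point is the \emph{same} for all measures being averaged, which is why the hypothesis must include symmetry (or at least a common mode in quantile coordinates) and not merely unimodality alone. A secondary, purely technical point is justifying the exchange of $\int\cdots\Pi(dm)$ with the defining identities/inequalities, which follows from integrability of the quantile functions under the standing assumptions; I would dispatch this in one sentence.
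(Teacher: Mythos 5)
Your proposal is correct and follows essentially the same route as the paper: integrate the quantile-level symmetry identity against $\Pi(dm)$ using \eqref{eq:quantcharac}, and for unimodality observe that symmetry pins the concave/convex switch-over point at $\tfrac12$ for every $m$, so that averaging preserves concavity on $[0,\tfrac12]$ and convexity on $[\tfrac12,1]$. Your explicit remarks on integrability and on why a common mode level is essential are a slightly more careful rendering of exactly the argument the paper gives.
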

	\begin{proof}
		{\color{black} Using the quantile function characterization \eqref{eq:quantcharac}, we have for  $y \in [0,\frac{1}{2}]$ that 
		\begin{align*}
		\textstyle Q_{\hat{m}}\left(\frac{1}{2}+y\right) &= \textstyle \int Q_m\left(\frac{1}{2}+y\right) \Pi(dm) 
		= \textstyle 2x_{\hat{m}}-Q_{\hat{m}}\left(\frac{1}{2}-y\right),
		\end{align*} 
		where $x_{\hat{m}}: = \int x_m \Pi(dm)$. In other words $\hat{m}$ is symmetric w.r.t. $x_{\hat{m}}$. Now, if each $m$ is  unimodal in addition to symmetric, its mode $\tilde{x}_m$ coincides with its median $Q_m(\frac{1}{2})$, and $Q_m(\lambda y + (1-\lambda)z )\geq (\mbox{resp.}  \leq )  \lambda Q_m( y) + (1-\lambda)Q_m(z) $ for all  $y,z < (\mbox{resp.} >) \frac{1}{2}$ and $\lambda \in [0,1]$. Integrating these inequalities w.r.t. $\Pi(dm)$, and using  \eqref{eq:quantcharac} we deduce that $\hat{m}$ is unimodal (with mode $1/2$) too. }
	\end{proof}
Unimodality is not preserved in general non-symmetric cases. Still, for some families of distributions unimodality is maintained after taking barycenter, as we show in the next:
\begin{proposition}\label{prop:log-concave-barycenter}
	If $\Pi\in\mathcal W_p(\mathcal W_{ac}(\mathbb R))$ is concentrated on log-concave univariate distributions, then the barycenter $\hat{m}$ is unimodal.
\end{proposition}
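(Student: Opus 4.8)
The plan is to reduce everything to the quantile picture and exploit convexity. Recall from just before the statement that a continuous univariate law is unimodal exactly when its quantile function is concave on an initial interval $(0,y_0)$ and convex on $(y_0,1)$; and recall the one-dimensional barycenter characterization \eqref{eq:quantcharac}, $Q_{\hat m}=\int Q_m\,\Pi(dm)$. So it suffices to exhibit such a $y_0$ for $Q_{\hat m}$. My strategy is to show that each $Q_m$ in the support has \emph{convex derivative}, that this property passes to the average, and that a positive function with convex derivative is automatically concave-then-convex.

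The key pointwise step: for a log-concave continuous law $m$, write $f_m=e^{-\phi_m}$ with $\phi_m$ convex. On the interior of $\{f_m>0\}$ the density is strictly positive, so $Q_m$ is differentiable on $(0,1)$ with $Q_m'=1/(f_m\circ Q_m)>0$. Put $g_m:=f_m\circ Q_m=1/Q_m'$. Differentiating $\log g_m=-\phi_m\circ Q_m$ and using $g_m\,Q_m'=1$ yields the clean identity $g_m'=-(\phi_m'\circ Q_m)$ (read with one-sided derivatives where $\phi_m$ is not smooth; they are monotone regardless). Since $\phi_m'$ is non-decreasing and $Q_m$ is non-decreasing, $g_m'$ is non-increasing, so $g_m$ is concave and positive; as $t\mapsto 1/t$ is convex and decreasing on $(0,\infty)$, the composition $Q_m'=1/g_m$ is convex on $(0,1)$.

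Next I would pass to the barycenter. By \eqref{eq:quantcharac} and Tonelli (all integrands non-negative), for $0<a<b<1$ one has $Q_{\hat m}(b)-Q_{\hat m}(a)=\int_a^b R(t)\,dt$, where $R(t):=\int Q_m'(t)\,\Pi(dm)\in[0,\infty]$. Here $\Pi\in\mathcal W_p(\mathcal W_{ac}(\mathbb R))$ enters: it gives $\int\!\int_0^1 |Q_m(y)|\,dy\,\Pi(dm)<\infty$, hence $Q_{\hat m}$ is finite on $(0,1)$, hence $R\in L^1_{\mathrm{loc}}((0,1))$ and so $R$ is finite a.e.; being an average of the convex functions $Q_m'$, $R$ satisfies the convexity inequality pointwise, so it is in fact finite and convex on all of $(0,1)$. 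Therefore $Q_{\hat m}\in C^1((0,1))$ with $Q_{\hat m}'=R$ convex and strictly positive; in particular $\hat m$ is a genuine continuous law, so the unimodality criterion applies. Finally, a convex function is quasi-convex: there is $y_0\in[0,1]$ with $R$ non-increasing on $(0,y_0)$ and non-decreasing on $(y_0,1)$, i.e.\ $Q_{\hat m}$ concave on $(0,y_0)$ and convex on $(y_0,1)$, which means $\hat m$ is unimodal with mode $Q_{\hat m}(y_0)$.

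The only genuinely delicate point is the barycenter step: interchanging differentiation with the integral against $\Pi$ and confirming $R<\infty$, which is exactly what the $\mathcal W_p$-integrability of $\Pi$ and the local-integrability argument above are for. A minor secondary issue is that log-concave densities need not be smooth and may vanish at the endpoints of their support, but since $Q_m$ maps $(0,1)$ into the interior of the support — where $\phi_m$ is a finite convex function with monotone one-sided derivatives — the identity $g_m'=-(\phi_m'\circ Q_m)$ and the concavity of $g_m$ survive without change, so this causes no real trouble.
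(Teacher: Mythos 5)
Your proof is correct and follows essentially the same route as the paper's: establish that each $Q_m'$ is convex and positive, pass this to $Q_{\hat m}'=\int Q_m'\,\Pi(dm)$ by linearity, and conclude that a positive convex derivative forces the concave-then-convex shape of $Q_{\hat m}$, i.e.\ unimodality. Your derivation of the convexity of $Q_m'$ (via the identity $(f_m\circ Q_m)'=-\phi_m'\circ Q_m$ and concavity of $f_m\circ Q_m$) and your justification of the differentiation under the integral and finiteness of $R$ are somewhat more careful than the paper's, which asserts these steps with less detail, but the underlying argument is the same.
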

	\begin{proof}
		If $f(x)$ is a log-concave density, then $-\log(f(x))$ is convex so is $\exp(-\log(f(x)) = \frac{1}{f(x)}$. Some computations reveal that $f(x)dx$ is unimodal for some $\tilde{x} \in \mathbb{R}$, so its quantile $Q(y)$ is concave for $y < \tilde{y}$ and convex for $y > \tilde{y}$ where $Q(\tilde{y}) = \tilde{x}$. Since $\frac{1}{f(x)}$ is convex decreasing for $x < \tilde{x}$ and convex increasing for $x > \tilde{x}$, then $\frac{1}{f(Q(y))}$ is convex. So $\frac{dQ}{dy}(y) = \frac{1}{f(Q(y))}$ is {\color{black} convex, positive and with minimum at $\tilde{y}$}. Given $\Pi$, its barycenter $\hat{m}$ fulfills $$\textstyle \frac{dQ_{\hat{m}}}{dy} = \int \frac{dQ_m}{dy} \Pi(dm),$$ so if all $\frac{dQ_m}{dy}$ are convex, then $\frac{dQ_{\hat m}}{dy}$ is {\color{black} convex and positive,  with a minimum at some $\hat{y}$. Thus,  $Q_{\hat{m}}(y)$ is concave for $y < \hat{y}$ and convex for $y > \hat{y}$ and $\hat{m}$ is unimodal with  mode at $ Q_{\hat{m}}(\hat{y})$.}
	\end{proof}

{\color{black} Useful examples of log-concave distribution families include the general normal,  exponential, logistic, Gumbel, chi-square and Laplace laws, as well as the  Weibull, power, gamma and beta families when their shape parameters are larger than one. }

\subsection{Distributions sharing a common copula}
\label{sec:sharing-copula}
If two multivariate distributions $P$ and $Q$ over $\mathbb{R}^q$ share the same copula, then their $W_p$ distance to the $p$-th power is the sum of the $W_p(\mathbb R)$ distances between their marginals raised to the $p$-power. Furthermore, if the marginals of $P$ have no atoms, then an optimal map is given by the coordinate-wise transformation 
$T(x) = (T^1(x_1),\dots,T^q(x_q))$ where $T^i(x_i)$ is the monotone rearrangement between the marginals $P^i$ and $Q^i$ for $i=1,\dots,q$. 
This setting allows us to easily extend the results from the univariate case to the multidimensional case.
\begin{lemma}\label{share-copula-barycenter}
	If $\Pi\in\mathcal W_p(\mathcal W_{ac}(\mathbb R^q))$ is concentrated on a set of measures sharing the same copula $C$, then the $p$-Wasserstein barycenter $\hat{m}$ of $\Pi$ has copula $C$ as well, and its $i$-th marginal $\hat{m}^i$ is the barycenter of the $i$-th marginal measures of $\Pi$. In particular the barycenter does not depend on $p$.
	\end{lemma}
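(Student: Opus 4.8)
The plan is to decouple the $q$-dimensional barycenter problem into $q$ one-dimensional ones, exploiting the fact recalled just before the statement: for measures sharing a copula the transport cost splits additively across coordinates. First I would introduce, for each $i=1,\dots,q$, the push-forward $\Pi^i\in\mathcal P(\mathcal W_p(\mathbb R))$ of $\Pi$ under the $i$-th marginal map $m\mapsto m^i$ (note that $\Pi^i$ is again concentrated on absolutely continuous measures, since a Lebesgue-null set $A\subset\mathbb R$ pulls back to the null set $A\times\mathbb R^{q-1}$), let $\hat m^i$ be the one-dimensional $p$-Wasserstein barycenter of $\Pi^i$ — characterized by $Q_{\hat m^i}=\int Q_\mu\,\Pi^i(d\mu)$ as in \eqref{eq:quantcharac}, hence independent of $p$ — and let $\hat m$ be the distribution on $\mathbb R^q$ with copula $C$ and $i$-th marginal $\hat m^i$. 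Write $V_p^{(i)}(\cdot):=\int W_p^p(\mu,\cdot)\,\Pi^i(d\mu)$ for the one-dimensional barycenter functionals.

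The heart of the argument is a pair of estimates. For an arbitrary competitor $\bar m\in\mathcal W_p(\mathbb R^q)$, projecting any coupling of $m$ and $\bar m$ onto each coordinate and using additivity of the cost gives $W_p^p(m,\bar m)\ge\sum_{i=1}^q W_p^p(m^i,\bar m^i)$, so integrating against $\Pi$,
\begin{equation*}
V_p(\bar m)\;\ge\;\sum_{i=1}^q\int W_p^p(m^i,\bar m^i)\,\Pi(dm)\;=\;\sum_{i=1}^q V_p^{(i)}(\bar m^i)\;\ge\;\sum_{i=1}^q V_p^{(i)}(\hat m^i),
\end{equation*}
the last inequality by minimality of each $\hat m^i$. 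Conversely, since $\Pi$-a.e.\ $m$ and $\hat m$ all carry the copula $C$, the recalled decomposition gives $W_p^p(m,\hat m)=\sum_i W_p^p(m^i,\hat m^i)$ for $\Pi$-a.e.\ $m$, whence $V_p(\hat m)=\sum_i V_p^{(i)}(\hat m^i)$. Combining, $V_p(\hat m)\le V_p(\bar m)$ for every $\bar m$, so $\hat m$ is a $p$-Wasserstein barycenter of $\Pi$.

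To see it is the only one — and to read off the asserted structure — I would note that any barycenter $\bar m$ must turn both inequalities into equalities: $V_p^{(i)}(\bar m^i)=V_p^{(i)}(\hat m^i)$ forces $\bar m^i=\hat m^i$ (the one-dimensional barycenter being unique), while equality in $W_p^p(m,\bar m)=\sum_i W_p^p(m^i,\bar m^i)$ for $\Pi$-a.e.\ $m$ — together with the fact that, the marginals $m^i$ being atomless, each coordinate's optimal coupling is the monotone rearrangement — forces $\bar m$ to be glued from the $\bar m^i$ by the common copula $C$. Hence $\hat m$ is \emph{the} barycenter, with copula $C$ and $i$-th marginal the barycenter of the $i$-th marginals of $\Pi$; since $C$ and each $\hat m^i$ are independent of $p$, so is $\hat m$.

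I expect the main obstacle to be a clean justification of the coordinate-wise inequality $W_p^p(m,\bar m)\ge\sum_i W_p^p(m^i,\bar m^i)$ for an arbitrary pair, and of the matching equality for copula-sharing measures; both reduce to the transport cost splitting additively over coordinates, which is the standing assumption of this subsection. The only other delicate point, needed for the rigidity/uniqueness direction, is that equality in the coordinate-wise inequality for atomless marginals pins down the copula of $\bar m$.
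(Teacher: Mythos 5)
Your proposal is correct and follows essentially the same route as the paper: reduce to the coordinate-wise inequality $W_p^p(m,\bar m)\ge\sum_i W_p^p(m^i,\bar m^i)$ (with equality for copula-sharing measures), integrate against $\Pi$ to bound $V_p$ below by the sum of the one-dimensional barycenter functionals, and observe that the measure glued from the marginal barycenters by the copula $C$ attains this bound. Your additional rigidity argument for uniqueness is a small bonus beyond what the paper writes out, but the core decomposition and both key estimates coincide with the published proof.
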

\begin{proof}
		It is known (\cite{cuestaalbertos1993optimal,alfonsi2014remark}) that for two distributions $m$ and $\mu$ with i-th marginals $m^i$ and $\mu^i$ for $i=1,...,q$ respectively, the $p$-Wasserstein metric satisfies $$\textstyle W_p^p(m,\mu) \geq \sum_{i=1}^n W_p^p(m^i,\mu^i),$$ where equality is reached if $m$ and $\mu$ share the same copula $C$ (we abused notation denoting $W_p$ the $p$-Wasserstein distance on $\mathbb R^q$ as well as on $\mathbb R$). Thus,
		\begin{align*}
		\textstyle\int W_p^p(m,\mu)\Pi(dm) &\textstyle\geq \int \sum_{i=1}^q W_p^p(m^i,\mu^i)\Pi(dm)=  \sum_{i=1}^q \int W_p^p(\nu,\mu^i)\Pi^i(d\nu),
		\end{align*}
		where $\Pi^i$ is defined via the identity $\int_{\mathcal P(\mathbb R)} f(\nu)\Pi^i(d\nu)= \int_{\mathcal P(\mathbb R^q)}  f(m^i)\Pi(dm)$.
		The infimum for the lower bound is reached on the univariate measures $\hat{m}^1,...,\hat{m}^q$ where $\hat{m}^i$ is the $p$-barycenter of $\Pi^i$, which means that $\hat{m}^i = \argmin \int W_p^p(\nu,\mu^i)\Pi^i(d\nu)$. It is plain that the infimum is reached on the distribution $\hat{m}$ with copula $C$ and $i$-th marginal $\hat{m}^i$ for $i=1,...,q$, which then has to be the barycenter of $\Pi$ and is independent of $p$.
	\end{proof}

A Wasserstein SGD iteration, starting from a distribution $\mu$, sampling $m \sim \Pi$, and with step $\gamma$, both $\mu$ and $m$ having copula $C$, produces the measure $\nu = ((1-\gamma)I + \gamma T_\mu^m)(\mu)$ characterized by having copula C and the $i$-th marginal quantile functions $$Q_{\nu^i}(\cdot) = (1-\gamma)Q_{\mu^i}(\cdot) + \gamma Q_{m^i}(\cdot),$$ for $i=1,\dots,q$. The batch stochastic gradient descent iteration works analogously. {Alternatively, one can perform (batch) stochastic gradient descent component-wise (with respect to the marginals $\Pi^i$ of $\Pi$) and then make use of the copula $C$.}
{As in the one-dimensional case, the barycenter is in this case automatically pseudo-associative since it is associative. Bounds for the expected optimality gap and in $W_2^2$ for the SGD sequence can be similarly deduced in this case too.  }

\subsection{Spherically equivalent distributions}\label{Ex:sphere}
We denote here by $\mathcal{L}(\cdot)$ the law of a random vector, so $m=\mathcal L(x)$ and $x\sim m$ are synonyms.
Following \cite{cuestaalbertos1993optimal}, another multidimensional case is constructed as follows: Given a fixed measure $\tilde{m} \in \mathcal{W}_{2,ac}(\mathbb{R}^q)$, its associated family of spherically equivalent distributions is $$\textstyle \mathcal{S}_0:=\mathcal{S}(\tilde{m}) = \left\lbrace\mathcal{L}\left(\frac{ \alpha(\lVert\tilde{x}\rVert_2)}{\lVert \tilde{x}\rVert_2}\tilde{x}\right)| \alpha \in \mathcal{ND}(\mathbb{R}), \tilde{x} \sim \tilde{m} \right\rbrace,$$
where $\lVert \, \rVert_2$ is the Euclidean norm and $\mathcal{ND}(\mathbb{R})$ is the set of non-decreasing non-negative functions of $\mathbb R_+$. These type of distributions include the simplicially contoured distributions, and also elliptical distributions with the same correlation structure. 

If $y \sim m \in \mathcal{S}_0$, then  $\alpha(r) = Q_{\lVert y \rVert_2}(F_{\lVert \tilde{x} \rVert_2}(r))$, where $Q_{\lVert y \rVert_2}$ is the quantile function of the norm of $y$, $F_{\lVert \tilde{x} \rVert_2}$ is the distribution function of the norm of $\tilde{x}$, and $y\sim \frac{ \alpha(\lVert\tilde{x}\rVert_2)}{\lVert \tilde{x}\rVert_2}\tilde{x} $. More generally, if $m_1=\mathcal{L}\left(\frac{ \alpha_1(\lVert\tilde{x}\rVert_2)}{\lVert \tilde{x}\rVert_2}\tilde{x}\right)$ and $m_2=\mathcal{L}\left(\frac{ \alpha_2(\lVert\tilde{x}\rVert_2)}{\lVert \tilde{x}\rVert_2}\tilde{x}\right)$, then the optimal transport from $m_1$ to $m_2$ is given by $T_{m_1}^{m_2}(x)=\frac{ \alpha(\lVert x\rVert_2)}{\lVert x\rVert_2}x$ where $\alpha(r) =Q_{\lVert x_2 \rVert_2}(F_{\lVert x_1 \rVert_2}(r))$.  Since $F_{\lVert x_1 \rVert_2}(r)=F_{\lVert \tilde{x} \rVert_2}(\alpha_1^{-1}(r))$ and $Q_{\lVert x_2 \rVert_2}(r)=\alpha_2(Q_{\lVert \tilde{x} \rVert_2}(r))$, we see that $\alpha(r) =\alpha_2(Q_{\lVert \tilde{x} \rVert_2}(F_{\lVert \tilde{x} \rVert_2}(\alpha_1^{-1}(r)))) = \alpha_2(\alpha_1^{-1}(r))$, so finally $$\textstyle T_{m_1}^{m_2}(x)=\frac{ \alpha_2(\alpha_1^{-1}(\lVert x\rVert_2))}{\lVert x\rVert_2}x.$$
Note that these kind of transports are closed under composition, convex combination, and contain the identity. A stochastic gradient descent iteration, starting from a distribution $\mu = \mathcal{L}\left(\frac{ \alpha_0(\lVert\tilde{x}\rVert_2)}{\lVert \tilde{x}\rVert_2}\tilde{x}\right)$, sampling $m=\mathcal{L}\left(\frac{ \alpha(\lVert\tilde{x}\rVert_2)}{\lVert \tilde{x}\rVert_2}\tilde{x}\right) \sim \Pi$, with step $\gamma$, produces $m_1 =T_0^{\gamma,m}(\mu):=  ((1-\gamma)I + \gamma T_\mu^m)(\mu)$. Since $T_0^{\gamma,m}(x) = \frac{ (\gamma\alpha + (1-\gamma)\alpha_0)(\alpha_0^{-1}(\lVert x\rVert_2))}{\lVert x\rVert_2}x$, we have that $m_1 = \mathcal{L}\left(\frac{ \alpha_1(\lVert\tilde{x}\rVert_2)}{\lVert \tilde{x}\rVert_2}\tilde{x}\right)$ with $\alpha_1 = \gamma\alpha + (1-\gamma)\alpha_0$. Analogously, the batch stochastic gradient iteration produces $$\textstyle \alpha_{1} = (1-\gamma)\alpha_0 + \frac{\gamma}{S}\sum_{i=1}^{S} \alpha_{m^{i}}.$$ Note that these iterations live in $\mathcal{S}_0$, thus, so does the barycenter $\hat{m} \in \mathcal{S}_0$.

For the barycenter $\hat{m}=\mathcal{L}\left(\frac{ \hat{\alpha}(\lVert\tilde{x}\rVert_2)}{\lVert \tilde{x}\rVert_2}\tilde{x}\right)$, the equation $\int T_{\hat{m}}^{m}(x)\Pi(dm) = x$ can be expressed as $ \hat{\alpha}(r) = \int \alpha_m(r)\Pi(dm)$, or equiv.\  $Q^{\hat{m}}_{\lVert \hat{y} \rVert_2}(p) = \int Q^m_{\lVert y \rVert_2}(p)\Pi(dm)$, where $Q^m_{\lVert y \rVert_2}$ is the quantile function of the norm of $y \sim m$. This is similar to the univariate setting {and, as in that case, the barycenter is associative, and convergence bounds for the optimality gap and the SDG sequence can be established}.

\subsection{Scatter-location family}\label{sec scat loc}
We borrow here the setting of \cite{alvarez2018wide}, where another useful multidimensional case is defined as follows: Given a fixed distribution $\tilde{m} \in \mathcal{W}_{2,ac}(\mathbb{R}^q)$, referred to as \emph{generator}, the generated scatter-location family is given by $$\mathcal{F}(\tilde{m}) : = \{\mathcal{L}(A\tilde{x}+b)| A \in \mathcal{M}_+^{q\times q} ,b \in \mathbb{R}^q, \tilde{x} \sim \tilde{m} \},$$
where $\mathcal{M}_+^{q\times q}$ is the set of symmetric positive definite matrices of size $q\times q$. Without loss of generality we can assume that $\tilde{m}$ has zero mean and identity covariance. {\color{black} A clear example is the  multivariate Gaussian family  $\mathcal{F}(\tilde{m})$,   with $\tilde{m}$ a  standard multivariate normal distribution.} 

The optimal transport between two members of $\mathcal{F}(\tilde{m})$ is explicit. If $m_1 = \mathcal{L}(A_1\tilde{x}+b_1)$ and $m_2 = \mathcal{L}(A_2\tilde{x}+b_2)$ then the optimal map from $m_1$ to $m_2$ is given by $T_{m_1}^{m_2}(x) = A(x-b_1) + b_2$ where $A = A_1^{-1}(A_1A_2^2A_1)^{1/2}A_1^{-1} \in \mathcal{M}_+^{q\times q}$.  {
The family $\mathcal{F}(\tilde{m})$ is therefore geodesically closed.  }

If $\Pi$ is supported on $\mathcal{F}(\tilde{m})$, then its $2$-Wasserstein barycenter $\hat{m}$ belongs to $\mathcal{F}(\tilde{m})$. Call its mean $\hat{b}$ and its covariance matrix $\hat{\Sigma}$. Since the optimal map from $\hat{m}$ to $m$ is $T_{\hat{m}}^{m}(x) = A_{\hat{m}}^{m}(x-\hat{b}) + b_m$ where $A_{\hat{m}}^{m} = \hat{\Sigma}^{-1/2}(\hat{\Sigma}^{1/2}\Sigma_m\hat{\Sigma}^{1/2})^{1/2}\hat{\Sigma}^{-1/2}$ and we know that $\hat{m}$-almost surely $\int T_{\hat{m}}^{m}(x)\Pi(dm) = x$, then we must have that $\int A_{\hat{m}}^{m} \Pi(dm) = I$, since clearly $\hat{b}=\int b_m \Pi(dm)$. As a consequence, we have $\hat{\Sigma} = \int (\hat{\Sigma}^{1/2}\Sigma_m\hat{\Sigma}^{1/2})^{1/2}\Pi(dm)$. 

A stochastic gradient descent iteration, starting from a distribution $\mu = \mathcal{L}(A_0\tilde{x}+b_0)$, sampling some $m=\mathcal{L}(A_m\tilde{x}+b_m) \sim \Pi$, and with step $\gamma$, produces the measure $\nu=T_0^{\gamma,m}(\mu):= ((1-\gamma)I + \gamma T_\mu^m)(\mu)$. If $\tilde{x}$ has a multivariate distribution $\tilde{F}(x)$, then $\mu$ has distribution $F_0(x)=\tilde{F}(A_0^{-1}(x-b_0))$ with mean $b_0$ and covariance $\Sigma_0 = A_0^2$. We have that $T_0^{\gamma,m}(x) =  ((1-\gamma)I + \gamma A_{\mu}^{m})(x-b_0) + \gamma b_m + (1-\gamma)b_0$ with $A_{\mu}^{m} := A_0^{-1}(A_0A_m^2A_0)^{1/2}A_0^{-1}$. Then  $$F_{\nu}(x)=:F_1(x) = F_0([T_0^{\gamma.m}]^{-1}(x)) =\tilde{F}( [(1-\gamma)A_0+\gamma A_{\mu}^{m}A_0]^{-1}(x-\gamma b_m - (1-\gamma)b_0 ) ),$$ with mean $b_1 = (1-\gamma)b_0 + \gamma b_m$ and covariance
\begin{align*}
\Sigma_1 = A_1^2&= [(1-\gamma)A_0+\gamma A_0^{-1} (A_0A_m^2A_0)^{1/2}][(1-\gamma)A_0+\gamma (A_0A_m^2A_0)^{1/2}A_0^{-1}]\\
&=  A_0^{-1}[(1-\gamma)A_0^2+\gamma  (A_0A_m^2A_0)^{1/2}][(1-\gamma)A_0^2+\gamma (A_0A_m^2A_0)^{1/2}]A_0^{-1}\\
&=A_0^{-1}[(1-\gamma)A_0^2+\gamma (A_0A_m^2A_0)^{1/2}]^2A_0^{-1}
\end{align*}
The batch stochastic gradient descent iteration is characterized by
\begin{align*}
b_{1} = \textstyle (1-\gamma)b_0+\frac{\gamma}{S}\sum_{i=1}^{S}b_{m^{i}}\,\text{and}\, 
A_{1}^2 = A_0^{-1}[(1-\gamma)A_0^2+\frac{\gamma}{S}\sum_{i=1}^{S} (A_0A_{m^{i}}^2A_0)^{1/2}]^2A_0^{-1}.
\end{align*}

{ Notice that, if all the matrices $\{A_m:m\in\text{supp}(\Pi)\}$ share the same eigenspaces, then the barycenter is pseudo associative as it is in fact associative. In case these matrices do not necessarily share the same eigenspaces, and $\tilde m$ is Gaussian, it is still possible to give conditions under which the barycenter is pseudo associative: as we have already mentioned, this property  holds  in the setting of \cite{chewi2020gradient} under conditions  of uniform boundedness and uniform positivity of covariance matrices.  }
\medskip

{\bf Acknowledgments }
We thank two anonymous referees for their valuable comments and questions that motivated some improvements of our results and of the presentation of the paper. This work was partially funded by the  ANID-Chile grants: Fondecyt-Regular 1201948 (JF) and 1210606 (FT); Center for Mathematical Modeling ACE210010 and FB210005 (JF, FT); and Advanced Center for Electrical and Electronic Engineering FB0008 (FT). 

\appendix
\section{Some remarks in the case of general measures}

We discuss a natural counterpart to the SGD sequence in the case where $\Pi$ is not necessarily supported on absolutely continuous measures. {However in this case we neither have a verifiable result guaranteeing the convergence of the SGD method, nor a verifiable result saying that the accumulation points of the SGD sequence are Karcher means necessarilly. The goal of this section is to introduce the objects needed for an analysis of the general case, to advance the convergence analysis as much as possible, and to illustrate the difficulties that we encounter which make us stop short of obtaining general convergence results.}

While we retain Assumption (A1), we modify Assumption (A2) into
\begin{enumerate}
\item[(A2')] $\Pi = \sum_{j=1}^\ell \lambda_j\delta_{\nu_j}$  with each $\nu_j\in \mathcal W_2(\mathcal X)$ and where the $\lambda_j\in (0,1)$ sum to 1. 
\end{enumerate}

\begin{remark}\label{rem:Kpi_non_abs}
   We can find $V:\mathcal X\to [0,\infty)$ convex, continuous, and super-quadratic (i.e.\ $\lim_{|y|\to\infty} V(y)/|y|^2=+\infty$) and $C\in(0,\infty)$ s.t.\ $\int Vd\nu_j\leq C$ for each $j$. Defining 
   \begin{align}\label{eq:appen_KPi}
       \textstyle K_\Pi=\left\{\mu:\int Vd\mu\leq C\right\},
   \end{align} 
   it follows that $K_\Pi$ is compact in $\mathcal W_2(\mathcal X)$ and $\Pi(K_\Pi)=1$. Moreover, if $(X,Y)$ is any optimal coupling with marginals $\mu$ and $\nu$, with $\mu$ and $\nu$ in $K_\Pi$, then also $\text{Law}(tX+(1-t)Y)\in K_\Pi$ for each $t\in(0,1)$. {In the sequel we denote by $K_\Pi$ a set with these properties which may or may not be given explicitly as in \eqref{eq:appen_KPi}.}
\end{remark}

Throughout we denote by $\Opt(\mu,\nu)$ the set of optimal couplings attaining the infimum defining $W_2(\mu,\nu)$, which is non-empty and may contain multiple elements, and we fix
\begin{align}
    \mathcal W_2(\mathcal X)^2\ni(\mu,\nu)\mapsto \opt(\mu,\nu)\in \Opt(\mu,\nu),
\end{align}
a measurable selection of optimal couplings. In practical terms an algorithm for computing or approximating optimal couplings would be automatically measurable.

\begin{definition}
	Let {\color{black}$\mu_0 \in K_\Pi$}, $m_k \stackrel{\text{iid}}{\sim}  \Pi$,  and $\gamma_k > 0$ for $k \geq 0$. We define the stochastic gradient descent (SGD) sequence by
	\begin{align}
	\label{eq:sgd-seq_extended}
	\mu_{k+1} := \text{Law} \left[(1-\gamma_k)X + \gamma_k Y\right],
	\end{align}
 where $\text{Law} (X,Y)=G(\mu_k,m_k)$.
\end{definition}
By Remark \ref{rem:Kpi_non_abs} we have $\{\mu_k:k\in\mathbb N\}\subset K_\Pi$ (a.s.).
\begin{definition}\label{def:Karch_app}
    $\mu\in \mathcal W_2(\mathcal X)$ is a Karcher mean of $\Pi$ if
    $$\textstyle (\mu(dx)-a.s.)\,\, x=\sum_{j=1}^\ell \lambda_j \mathbb E[Y_j|X=x],$$
    where (for each $j$) $\text{Law} (X,Y_j)$ is some coupling in $\Opt (\mu,\nu_j)$ .
\end{definition}

We first observe that if $\mu$ is absolutely continuous, then it is a Karcher mean according to Definition \ref{def:Karch_app} iff its is a Karcher mean according to Definition \ref{defi:Karcher_intro}. On the other hand, if $\mu$ is a barycenter of $\Pi$ then it is also a Karcher mean according to Definition \ref{def:Karch_app}. Indeed, if $\text{Law} (X,Y_j)\in\Opt (\mu,\nu_j)$ and we couple all these random variables in the same probability space, then
\[\textstyle \sum_{i=1}^\ell \lambda_i W_2^2(\mu,\nu_i) = \mathbb E\left[\sum_{i=1}^\ell  \lambda_i |X-Y_i|^2\right] \geq  \mathbb E\left[\sum_{j=1}^\ell\lambda_j\left|Y_j-\sum_{i=1}^\ell  \lambda_i Y_i \right|^2\right] \geq \sum_{i=1}^\ell \lambda_i W_2^2(\tilde \mu,\nu_i) ,\]
so the inequalities above must be actual equalities and we have $\mu=\text{Law}(\sum_i  \lambda_i Y_i):=\tilde \mu$ as well as $X= \sum_i  \lambda_i Y_i$ (a.s.). Taking conditional expectation w.r.t.\ $X$ in the latter, we conclude.

In direct analogy to the absolutely continuous case, we denote $$\textstyle F(\mu):=\frac{1}{2}\sum_j \lambda_j W_2^2(\mu,\nu_j),$$ and we introduce 
$$\textstyle F'(\mu)(x):= x - \sum_{j=1}^\ell\lambda_j \mathbb E[Y_j|X=x], $$
where $(X,Y_j)\sim G(\mu,\nu_j)$.  With this notation we have the implication:  $\| F'(\mu)\|_{L^2(\mu)}=0 \,\,\Rightarrow \,\, \mu $ is a Karcher mean. As before we denote by $\mathcal F_{0}$ the trivial sigma-algebra and $\mathcal F_{k+1},k\geq 0,$  the sigma-algebra generated by $m_0,\dots,m_{k}$.
\begin{proposition}
	\label{prop:bound-step_gnral}
	The SGD sequence in eq.~\eqref{eq:sgd-seq_extended} verifies (a.s.) that 
	\begin{align}
	\label{eq:sgd-ineq_apendix}
	\mathbb{E}\left[F(\mu_{k+1}) - F(\mu_k)|\mathcal F_{k}\right] \leq \gamma_k^2F(\mu_k) - \gamma_k\lVert F^\prime(\mu_k)\rVert^2_{L^2(\mu_k)}.
	\end{align}
	\begin{proof} 

 We can define a coupling $(X_k,Y_k,Z_i)$ such that $\text{Law}(X_k,Y_k)\in G(\mu_k,m_k)$, $\text{Law}(X_k,Z_i)\in G(\mu_k,\nu_i)$, and where $Y_k$ and $Z_i$ are independent conditionally on $\mathcal F_k$. Then the coupling $((1-\gamma_k)X_k+\gamma_k Y_k,Z_i)$ has first marginal $\mu_{k+1}$ and second marginal $\nu_i$. We compute
 \begin{align*}
  W_2^2(\mu_{k+1},\nu_i) 	  	\leq \mathbb E[|(1-\gamma_k)X_k+\gamma_k Y_k-Z_i|^2] &= \mathbb E[|X_k-Z_i|^2] -2\gamma_k \mathbb E[ \langle X_k-Z_i, X_k-Y_k\rangle ] + \gamma_k^2\mathbb E[|X_k-Y_k|^2].
 \end{align*}
 Hence
 \begin{align*}
     F(\mu_{k+1}) =&  \textstyle \frac{1}{2}\int W_{2}^2(\mu_{k+1},\nu)\Pi(d\nu)\\
		\leq& \textstyle \frac{1}{2}\sum_{i=1}^\ell \lambda_i \mathbb E[|X_k-Z_i|^2]  - \gamma_k \mathbb E[ \langle X_k- \sum_{i=1}^\ell \lambda_i Z_i,X_k-Y_k\rangle ] +  \frac{1}{2}\gamma_k^2\mathbb E[|X_k-Y_k|^2]\\
  =& \textstyle F(\mu_k) -\gamma_k \mathbb E[ \langle X_k- \sum_{i=1}^\ell \lambda_i Z_i ,X_k-Y_k\rangle ]  + \frac{1}{2} \gamma_k^2 W_2^2(\mu_k,m_k).
 \end{align*}
 Taking conditional expectation with respect to $\mathcal F_k$, and as $m_k$ is independently sampled from this sigma-algebra, we see that  $$\textstyle \mathbb E[W_2^2(\mu_k,m_k)|\mathcal F_k]= \sum_{i=1}^\ell\lambda_i W_2^2(\mu_k,\nu_i)=2F(\mu_k).$$
 Similarly, we have
$$\textstyle \mathbb E[ \mathbb E[ \langle X_k- \sum_{i=1}^\ell \lambda_i Z_i,X_k-Y_k\rangle ] |\mathcal F_k]=\mathbb E[ \langle X_k- \sum_{i=1}^\ell \lambda_i\mathbb E[Z_i|X_k] ,X_k-\sum_{i=1}^\ell  \lambda_i \mathbb E[Y_i|X_k]\rangle ].
$$
We conclude that 
\begin{align*}
		\mathbb{E}\left[F(\mu_{k+1})|\mathcal F_k\right] \leq &  \textstyle   (1+\gamma_k^2)F(\mu_k) -\gamma_k \lVert F^\prime(\mu_k)\rVert^2_{L^2(\mu_k)}. 
\end{align*}
	\end{proof}
\end{proposition}

{he next lemma is the only part where we use that $\Pi$ is supported in finitely many measures. With more refined arguments one could surely avoid this limitation.}

\begin{lemma}
	\label{continuity norm F'v Appendix}
	 Let $(\rho_n)_n \subset \mathcal{W}_{2}(\RR^q)   $  be a sequence converging w.r.t.\  $W_2$ to $\rho\in  \mathcal{W}_{2}(\RR^q) $. Then, as $n\to \infty$,we have \textcolor{black}{ i) :  $F(\rho_n)\to  F(\rho)$ and ii) : 
	  $\| F'(\rho_n)\|_{L^2(\rho_n)}\to 0$ implies that $\rho$ is a Karcher mean.}
\end{lemma}

\begin{proof}
    Part (i) is immediate since $W_2(\rho_n,\nu_j)\to W_2(\rho,\nu_j)$ for each $j$. For part (ii), we first observe that $G(\rho_n,\nu_j)$ is tight, for each $j$. Passing to a subsequence if necessary, Skorokhod representation theorem yields, on some probability space, a sequence of random variables $X^n\sim\rho_n$ and $Y_j^n\sim \nu_j$, as well as $X\sim \rho$ and $Y_j\sim\nu_j$, such that $X^n\to X$ and for each $j$ also $Y_j^n\to Y_j$ (convergence is a.s.\ and in $L^2$). The sequence $\{\mathbb E[Y_j^n|X^n]:n\in \mathbb N\}$ is $L^2$-bounded, and hence by repeatedly applying Komlos theorem, we find yet another subsequence (relabeled so it is indexed by $\mathbb N$) such that
    $$\textstyle \frac{1}{n}\sum_{r\leq n}\mathbb E[Y_j^r|X^r]\to Z_j,$$
    almost surely and in $L^2$, for each $j$. 
    
    We check that $\mathbb E[Z_j|X]=\mathbb E[Y_j|X] $. Indeed, if $g$ is bounded and continuous
    \begin{align*}
        \mathbb E[g(X)Z_j]&= \textstyle \lim_n \mathbb E\left[g(X)\left(\frac{1}{n}\sum_{r\leq n}E[Y_j^r|X^r]\right)\right]\\ &= \textstyle \lim_n
      \frac{1}{n}\sum_{r\leq n}  \mathbb E[g(X^r)E[Y_j^r|X^r]]+ \frac{1}{n}\sum_{r\leq n}\mathbb E[\{g(X)-g(X^r)\}\mathbb E[Y_j^r|X^r]] \\ &= \textstyle 
      \lim_n
      \frac{1}{n}\sum_{r\leq n}  \mathbb E[g(X^r)Y_j^r] = \mathbb E[g(X)Y_j],
    \end{align*}
    by dominated convergence. Also
    \begin{align*}
        0 &= \textstyle \lim_n \| F'(\rho_n)\|_{L^2(\rho_n)}^2 = \lim_n \mathbb E\left[\left | X^n-\sum_{j=1}^\ell\lambda_j \mathbb E[Y_j^n|X^n]  \right|^2\right] \\
        &= \textstyle \lim_n \frac{1}{n}\sum_{r\leq n}\mathbb E\left[\left| X^r-\sum_{j=1}^\ell\lambda_j \mathbb E[Y_j^r|X^r]  \right |^2\right] 
        \\ &\geq  \textstyle \lim_n \mathbb E\left[\left | \frac{1}{n}\sum_{r\leq n}X^r-\sum_{j=1}^\ell\lambda_j \frac{1}{n}\sum_{r\leq n} \mathbb E[Y_j^r|X^r]  \right |^2\right]  \\
        &\geq \textstyle \mathbb E\left[\left |X- \sum_{j=1}^\ell\lambda_j Z_j\right |^2\right] \\
        &\geq \textstyle  \mathbb E\left[\left |X- \sum_{j=1}^\ell\lambda_j \mathbb E[Z_j|X]\right |^2\right], 
    \end{align*}
    and we conclude that $X= \sum_{j=1}^\ell\lambda_j \mathbb E[Y_j|X]$ (a.s.). Finally we remark that $\text{Law}(X,Y_j)\in\Opt(\rho,\nu_j)$, and hence we conclude that $\rho$ is a Karcher mean.
\end{proof}


{We can now provide a convergence result. The hypotheses of this result are implied by the hypotheses of Theorem \ref{thm:sgd-convergence} in case that $\Pi$ is concentrated on absolutely continuous measures. However we stress that we do not know of any example where Theorem \ref{thm:sgd-convergence_appendix} is applicable and $\Pi$ is concentrated on non-absolutely continuous measures. For this reason we rather think that this result and its proof are a template of what \emph{could happen} or \emph{could be done} in the general case, and use it really to illustrate the difficulties present in the general case.}

\begin{theorem}
	\label{thm:sgd-convergence_appendix}
	Assume (A1), (A2'), conditions \eqref{eq:cond1-gamma} and \eqref{eq:cond2-gamma}, that $\Pi$ admits a unique Karcher mean (in the sense of Definition \ref{def:Karch_app}) in $K_\Pi$, {and that $K_\Pi$ contains a (hence, exactly one) 2-Wasserstein barycenter $\hat\mu$}. Then,  the SGD sequence $\{\mu_k\}_k$ in eq.~\eqref{eq:sgd-seq_extended} is a.s.~convergent to $\hat{\mu}$. 
\end{theorem}

\begin{proof}
The proof of Theorem \ref{thm:sgd-convergence} can be followed verbatim up to Equation \eqref{eq:liminf0}, namely
	\begin{equation*} \textstyle
	\liminf_{t\to \infty} \lVert F^\prime(\mu_t)\rVert^2_{L^2(\mu_t)}=0,\,\, \text{a.s.}
\end{equation*}

We observe that if $\rho_n\to\rho$ with $\{\rho_n\}_n\subset K_\Pi$ is s.t. $F(\rho_n )\geq F(\hat \mu)+\epsilon$, for $\epsilon >0$, and $\lVert F^\prime(\rho_n)\rVert^2_{L^2(\rho_n)}\to 0$, then by Lemma \ref{continuity norm F'v Appendix} $\rho$ is a Karcher mean in $K_\Pi$ which is necessarily different from $\hat \mu$. This would contradict the uniqueness of Karcher means in $K_\Pi$. Thus we establish
\begin{equation*}\textstyle 
	\forall \varepsilon>0 , \, \inf_{\{ \rho : F(\rho) \geq \hat F +\varepsilon \} \cap K_\Pi}  \,  \lVert F^\prime(\rho)\rVert^2_{L^2(\rho)}>0.  
	\end{equation*}
  From here on we can follow again the proof of Theorem \ref{thm:sgd-convergence}. 
\end{proof}

In the absolutely continuous case, covered in the rest of this paper, we required the Karcher mean to be absolutely continuous. Moreover in this case there is a unique barycenter (automatically absolutely continuous). Once we leave the absolutely continuous world we have to be more careful, as illustrated by the following example:

\begin{example}
Let $Z$ be a $\mathcal X$-valued random variable with finite second moment, and let $b_j\in\mathcal X$ with $\sum_j \lambda_j b_j =0$. Define $\nu_j:=Law(Z+b_j)$. Then $X\sim\mu$ is a Karcher mean iff $X=\mathbb E[Z|X]$ for some optimal coupling of $X$ and $Z$. Hence $X:=Z$ and $X:=\mathbb E[Z]$ are both Karcher means per Definition \ref{def:Karch_app}, even if $Z$ (and so each $\nu_j$) is absolutely continuous. On the other hand one can check that the barycenter of the $\nu_j$'s is unique and given by $Law(Z)$. 
\end{example}

\begin{example}
We continue in the the setting of the previous example and choose $\mathcal X=\mathbb R^2$. Here $Z=(B,0) $ with $B\in\{-1,1\}$ with equal probabilities, and $\nu_1=Law((B,1))$, $\nu_2=Law((B,-1))$. In this case we check that $\mu^p:=p(\delta_{(1,0)}+\delta_{(-1,0)})+(1-2p)\delta_{(0,0)}$ is a Karcher mean for each $p\in[0,1/2]$. In particular the barycenter, given by $\mu^1$, is not isolated in the sense that any ball around $\mu^1$ contains other Karcher means (taking $p$ close to 0.5), and moreover those Karcher means might be more 'regular' than the barycenter in the sense of having a strictly larger support. Also remark that defining $K:=\{Law((B,r)):r\in I\}$ with $I$ some interval containing $\pm 1$, then $K$ is compact, contains the $\nu_i$'s, and transport maps between elements in $K$ are associative. 
\end{example}

{
The two examples above show that uniqueness of Karcher means (in the sense of Definition \ref{def:Karch_app}), or even uniqueness of these within a nice set $K_\Pi$, is an assumption which can be hardly verified in the general case. Furthermore we encounter the same problem with a localized version of this assumption (for example: ``uniqueness of Karcher means in a small ball intersected with $K_\Pi$''). To complicate the matter further, it is known that in the general case multiple barycentrers may exist. For all these reasons we are inclined to believe that the correct object of study in the general case is the regularized barycenter problem (or more ambitiously, the limit of such regularized problems as the regularization parameter goes to zero at a suitable speed). We refer the reader to the recent works  \cite{Ch23,ChVa23}, and the references therein, for promising results in this direction. }

\bibliography{bibliography}
\bibliographystyle{plain}

\end{document}